\documentclass[a4paper, 11pt]{style/preprint}
\usepackage[left=1.5in, right=1.5in, bottom=1.5in]{geometry}
\usepackage{style/comment}
\usepackage{style/mhenvs}
\usepackage{style/mhsymb}

\usepackage[full]{textcomp}
\usepackage[osf]{newtxtext}
\usepackage{amssymb}
\usepackage{amsmath}
\usepackage{mathtools}
\usepackage{mhequ}
\usepackage{booktabs}
\usepackage{tikz}
\usepackage{mathrsfs}
\usepackage{longtable}
\usepackage{microtype}
\usepackage{wasysym}
\usepackage{centernot}
\usepackage{enumitem}
\usepackage[dvipsnames]{xcolor}
\usepackage{hyperref}
\usepackage{crossreftools}
\usepackage{physics}
\usepackage{biblatex}
\usepackage{orcidlink}

\mathtoolsset{showonlyrefs=true}

\newif\ifdark
\IfFileExists{dark}{\darktrue}{\darkfalse}

\ifdark

\definecolor{darkred}{rgb}{0.9,0.2,0.2}
\definecolor{darkblue}{rgb}{0.7,0.3,1}
\definecolor{darkgreen}{rgb}{0.1,0.9,0.1}
\definecolor{pagebackground}{rgb}{.15,.21,.18}
\definecolor{pageforeground}{rgb}{.84,.84,.85}
\pagecolor{pagebackground}
\AtBeginDocument{\globalcolor{pageforeground}}

\else

\definecolor{darkred}{rgb}{0.7,0.1,0.1}
\definecolor{darkblue}{rgb}{0.4,0.1,0.8}
\definecolor{darkgreen}{rgb}{0.1,0.7,0.1}
\definecolor{pagebackground}{rgb}{1,1,1}
\definecolor{pageforeground}{rgb}{0,0,0}

\fi

\definecolor{newred}{RGB}{208,16,76}
\definecolor{newgreen}{RGB}{34,125,81}

\makeatletter
\newcommand{\globalcolor}[1]{%
	\color{#1}\global\let\default@color\current@color
}
\makeatother

\DeclareSymbolFont{timesoperators}{T1}{ptm}{m}{n}
\SetSymbolFont{timesoperators}{bold}{T1}{ptm}{b}{n}

\makeatletter
\renewcommand{\operator@font}{\mathgroup\symtimesoperators}
\makeatother

\DeclareMathAlphabet{\mathbbm}{U}{bbm}{m}{n}
\DeclareFontFamily{U}{BOONDOX-calo}{\skewchar\font=45 }
\DeclareFontShape{U}{BOONDOX-calo}{m}{n}{
	<-> s*[1.05] BOONDOX-r-calo}{}
\DeclareFontShape{U}{BOONDOX-calo}{b}{n}{
	<-> s*[1.05] BOONDOX-b-calo}{}
\DeclareMathAlphabet{\mcb}{U}{BOONDOX-calo}{m}{n}
\SetMathAlphabet{\mcb}{bold}{U}{BOONDOX-calo}{b}{n}

\makeatletter 

\newcommand*{\fat}{}
\DeclareRobustCommand*{\fat}{%
	\mathbin{\mathpalette\bigcdot@{}}}
\newcommand*{\bigcdot@scalefactor}{.5}
\newcommand*{\bigcdot@widthfactor}{1.15}
\newcommand*{\bigcdot@}[2]{%
	\sbox0{$#1\vcenter{}$}
	\sbox2{$#1\cdot\m@th$}%
	\hbox to \bigcdot@widthfactor\wd2{%
		\hfil
		\raise\ht0\hbox{%
			\scalebox{\bigcdot@scalefactor}{%
				\lower\ht0\hbox{$#1\bullet\m@th$}%
			}%
		}%
		\hfil
	}%
}

\DeclareRobustCommand{\TitleEquation}[2]{\texorpdfstring{\StrLeft{\f@series}{1}[\@firstchar]$\if b\@firstchar\boldsymbol{#1}\else#1\fi$}{#2}}

\makeatother

\theoremnumbering{Alph}
\renewtheorem{theorem*}{Theorem}

\numberwithin{equation}{section}

\def\slash{\leavevmode\unskip\kern0.18em/\penalty\exhyphenpenalty\kern0.18em}
\def\dash{\leavevmode\unskip\kern0.18em--\penalty\exhyphenpenalty\kern0.18em}

\let\epsilon\varepsilon

\def\${|\!|\!|}

\setlist{noitemsep,topsep=4pt}
\def\para_#1{/\!\!/_{\!#1}}

\begin{document}

\title{Central limit theorem for the Allen-Cahn equation 
with supercritical random initial conditions}

\author{Colin Piernot \orcidlink{0009-0007-4448-5328} and
  Kexing Ying \orcidlink{0000-0002-8292-4746}}

\institute{Institute of Mathematics, EPFL, Switzerland}

\maketitle

\begingroup
\renewcommand\thefootnote{}
\footnote{The authors acknowledge support from NCCR SwissMAP.}
\addtocounter{footnote}{-1}
\endgroup

 \begin{abstract}

 We study the large-scale behavior of solutions to the Allen-Cahn reaction-diffusion equation 
 with Gaussian initial data.
 We consider the case of short-range dependence in the associated supercritical regime with spatial dimension $d \ge 3$. In this case, the non-linearity formally vanishes on large scales under the diffusive rescaling.
 Accordingly, we prove a central limit theorem for the rescaled solution, more precisely, that it converges to the solution of the heat equation started from a white noise.
 These initial conditions for the limit depend non-trivially both on the source of randomness and on the non-linearity. Our proof uses estimates obtained by a combination of comparison principles and Malliavin calculus, initiated in \cite{dunlapcastillo} in the critical case.
 However, the result in \cite{dunlapcastillo} is not a fluctuation result but rather an $L^2_\mathbf{P}$ comparison to a McKean-Vlasov problem with Gaussian solutions.
 Hence the mechanism behind the Gaussianity of the limit differs, and the proof requires new ideas that should be further applicable to other supercritical problems.

\noindent
 {\scriptsize {\it Keywords:} Random initial conditions, stochastic PDE, Malliavin calculus, scaling-limits, central limit theorem. }\\
 {\scriptsize \textit{MSC Subject classification: 60F05, 60H07, 60H15, 60H17}}
 
\end{abstract}

\section{Introduction}

The aim of this article is to study the large scale behavior of the Allen-Cahn equation
\begin{equation}\label{eq:AC}
  \partial_t u(t,x) = \Delta u(t,x) - \lambda u(t,x) ^3,\quad t>0,\:x\in \mathbb{R}^d,
\end{equation}
where the coupling parameter $\lambda$ is assumed to be positive, in dimension $d\geq 3$. We consider the case of random initial conditions:
the field $\big(u(0,x)\big)_{x\in \mathbb{R}^d}$ is a smooth,
centered and stationary Gaussian field with short-range correlations. More precisely, we assume
 that $u(0,\cdot) = \rho * \xi$ , where $\xi$ is a white noise on $\mathbb{R}^d$ and $\rho$ is a
smooth, non-negative function with compact support such that $\int_{\mathbb{R}^d} \rho(x) \dd x = 1$. In particular,
the covariance function of the initial condition, given by
$\mathbf{E}[u(0,0)u(0,x)] = C_{\text{init}}(x) = \rho * \tilde{\rho}(x)$, where $\tilde{\rho}(x) = \rho(-x)$, is integrable. 
In this framework, the study of the large scale behavior of the solution $u$ is of interest since the 
non-linearity is \textit{supercritical} when $d \geq 3$. Indeed, na\"ively rescaling diffusively the 
solution to \eqref{eq:AC}, i.e. defining for $\eps > 0$,
\begin{equation}\label{eq:rescaled_u}
  u_\eps(t,x) = \eps^{-\frac{d}{2}} u(\eps^{-2} t, \eps^{-1} x),
\end{equation}
we see that $u_\eps$ solves
\begin{equation}\label{eq:rescaled_AC}
  \partial_t u_\eps(t,x) = \Delta u_\eps(t,x) - \lambda \eps^{d-2} u_\eps(t,x)^3 ,
\end{equation}
with initial condition 
\[u_\eps(0,x) = \eps^{-\frac{d}{2}} u(0, \eps^{-1} x) = \eps^{-\frac{d}{2}} (\rho * \xi)(\eps^{-1} x) 
  \overset{\text{law}}{=} \rho_\eps * \xi \underset{\eps\downarrow 0}{\rightarrow} \xi .\]
This seems to suggest that, at large scales, the non-linear term vanishes and the solution behaves 
like the solution to the heat equation with white noise initial condition. However, crucially, the 
pathwise solution map $ u_\eps(0) \mapsto u_\eps $ cannot be constructed as a continuous map in a 
reasonable topology to justify this argument.

This is in contrast with the \textit{subcritical case} of Gaussian initial data of regularity 
$\alpha > - 1$ where one can actually construct an appropriate solution map on a space modelled 
after the noise: see for example the works \cite{chevyrev,HairerRosati,Cann4} that replicate part of 
the so-called pathwise solution theory for singular SPDEs of \cite{Hairer} and \cite{Gub}.
The impossibility of carrying this out here is because the equation is \textit{supercritical} in 
the sense of singular SPDEs.

The goal of stepping away from the subcritical setting, by tackling supercritical and critical 
equations has raised much attention in recent years. Substantial progress has been made in several 
settings, however heavily relying on specific features of the problems considered. To give just some examples, a
line of works \cite{Cann1, Cann2, Cann3} consider the case of stationary solutions in both the
supercritical and critical settings, to equations with Gaussian invariant measures such as the AKPZ
equation and the stochastic Burgers equation. Other works such as \cite{Nikos, Gu, HairerGerolla1, HairerGerolla2} study the
stochastic heat equation, but rely heavily on either explicit representations of the solution, or on
a martingale structure (or the KPZ equation, via a Cole-Hopf transform).

Due to the supercriticality and the lack of explicit representations, understanding the large scale behavior of the solution to \eqref{eq:AC} is a
non-trivial task. In fact, it turns out that the non-linearity, though vanishing in the large scale limit 
\eqref{eq:rescaled_AC}, still has an effect on the limiting statistics of the solution.
Namely, our main result Theorem \ref{thm:Main} states that the solution $u_\eps$ converges in law to the 
solution of the heat equation with initial data which is white-in-space while having a modified variance. 
The convergence result obtained also shows that the effective noise in the limit has non-trivial correlations with the large scales of the initial data.
This is in contrast with what was observed at the critical dimension $d=2$ in \cite{gabrielrosati} and  
\cite{dunlapcastillo}, in the so called weak-coupling regime. In these works, the authors study a similar problem but in the critical setting, where further taming of the non-linearity is required to 
prove convergence. They proved as expected that the non-linearity affects the limiting variance, but 
there the correlations with the initial data are trivial. Indeed, they show $L^2_\mathbf{P}$ convergence of the solution to 
an explicit function of the microscopic noise $\xi$. In \cite{gabrielrosati}, this is carried out through 
a careful investigation of a combinatorial expansion of the solution, reminiscent of the methods of the 
subcritical regime. On the other hand, in \cite{dunlapcastillo}, an $L^2_\mathbf{P}$ comparison to a McKean-Vlasov problem, 
via tools of Malliavin calculus, is used and allows to treat more general non-linearities. Crucially, 
both arguments rely heavily on the monotonicity property of the non-linearity, a feature that remains at 
the very core of our argument. However, the actual mechanism behind the limiting result, as explained
in detail in Section \ref{sec:outline}, turns out to be different. This explains both the 
difference in the notions of convergence, as well as the absence of an explicit representation for the 
limiting variance. 

\subsection{Main results}
In this section, we state precisely our main result concerning the large scale behavior of the solution to \eqref{eq:AC}.
\begin{theorem}\label{thm:Main}
  Let $d\geq 3$ and $u$ be the solution to \eqref{eq:AC} with initial condition as above for some $\lambda >0$.
  For $\eps > 0$, let $u_\eps$ be the rescaled solution defined in \eqref{eq:rescaled_u}.
  Then, as $\eps \downarrow 0$, the family of processes $(u_\eps)_{\eps > 0}$ converges in law in
  $C\left((0,+\infty), \mathcal{S}'(\mathbb{R}^d)\right)$ to a Gaussian process that solves the heat 
  equation with initial condition $\sigma_\lambda \tilde{\xi}$, where $\tilde{\xi}$ is spatial white noise, 
  and $\sigma_\lambda$ is a positive constant. Namely,
  $u_\eps \Rightarrow( \sigma_\lambda p_t*\tilde{\xi})_{t>0} $. 

  Furthermore, we have that
  \begin{enumerate}[label=(\roman*)]
    \item\label{item:nontriv} $\sigma_\lambda$ depends on $\lambda$ non-trivially. In particular, 
      $\lim_{\lambda \to + \infty}\sigma_\lambda = 0$.
    \item\label{item:creation} for $\lambda$ sufficiently small, 
      $\left(u_\eps(t), p_{t} * u_\eps(0) \right)_{t > 0}$ converges in law to a Gaussian pair of 
      processes with non-trivial correlations. In particular, 
      $$\left(u_\eps(t), p_{t} * u_\eps(0)\right)_{t>0} \Rightarrow( \sigma_\lambda p_{t}*\tilde{\xi},  p_{t}*\xi)_{t>0} $$
      where $\xi,\tilde{\xi}$ are standard white noises with $$\mathbf{E}[\xi(x)\tilde{\xi}(y)] = c_\lambda \delta(x-y)\text{ for some }0<c_\lambda<1.$$
  \end{enumerate}
\end{theorem}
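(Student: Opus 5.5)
The strategy is to show that, for a fixed macroscopic time, the field $u(T,\cdot)$ at a large but finite microscopic time $T$ is a stationary, short-range-dependent (in the sense of fast decay of correlations) functional of $\xi$, and that from time $T$ on the non-linearity has negligible effect on large scales. Concretely, I would split $u_\eps(t) = \eps^{-d/2} u(\eps^{-2}t,\eps^{-1}\cdot)$ as follows. Write $v^{(T)}$ for the solution of the heat equation started at time $T$ from $u(T,\cdot)$. The rescaled $v^{(T)}$ converges to $\sigma_{\lambda,T}\, p_t*\tilde\xi_T$, with
\[
\sigma_{\lambda,T}^2 = \int_{\mathbb{R}^d} \mathrm{Cov}\big(u(T,0),u(T,x)\big)\dd x .
\]
This is the classical CLT for integrals of stationary fields, once the correlations are shown to be integrable. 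The remaining work is to show $\sigma_{\lambda,T}\to\sigma_\lambda$ as $T\to\infty$, and that $u_\eps - v^{(T)}_\eps\to 0$ uniformly in $\eps$ as $T\to\infty$.

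The first input is a set of Malliavin derivative estimates, following \cite{dunlapcastillo}. Differentiating \eqref{eq:AC} gives $\partial_t D_y u = \Delta D_y u - 3\lambda u^2 D_y u$ with $D_y u(0,x)=\rho(x-y)$. Because $\lambda u^2\ge 0$, the comparison principle yields $0\le D_y u(t,x)\le (p_t*\rho)(x-y)$. This has two consequences. First, Gaussian Poincaré gives $\mathrm{Var}(\langle u(t),\phi\rangle)$ bounds, and covariance decay $|\mathrm{Cov}(u(t,0),u(t,x))|\lesssim \int p_t*\rho(-y)\,p_t*\rho(x-y)\dd y$, which is integrable uniformly in $t$. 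Second, the second-order Poincaré inequality (Nourdin–Peccati–Reinert) combined with analogous comparison bounds on $D^2u$, which solves a linear equation with source $-6\lambda u\,Du\,Du$, yields quantitative Gaussianity of $\eps^{d/2}\int u(\eps^{-2}t,x)\phi(\eps x)\dd x$. Joint Gaussianity with $\langle \xi,\cdot\rangle$ comes from applying the same inequality to linear combinations. Tightness in $C((0,\infty),\mathcal{S}'$) follows from moment bounds on increments via the equation and the maximum principle.

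The heart of the argument, and the step I expect to be the main obstacle, is identifying the limiting variance as $\sigma_\lambda^2=\lim_{t\to\infty}\int \mathrm{Cov}(u(t,0),u(t,x))\dd x$ and showing the limit exists. For this I would write $\int \mathrm{Cov}(u(t,0),u(t,x))\dd x=\lim$ of $\mathrm{Var}$ of spatial averages and use Clark–Ocone/Poincaré: the effective variance equals $\mathbf{E}\big|\int D_y u(t,x)\dd x\big|^2$-type quantities. Since $\Phi_t(y)=\int D_yu(t,x)\dd x$ is nonincreasing in $t$ (the total mass of the linearized equation decays, as $3\lambda u^2\ge0$), monotone convergence gives a limit. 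The correction from the non-linearity after time $T$ is controlled by the rescaled Duhamel term $\lambda\eps^{d-2}\int p_{t-s}*u_\eps^3$. Its expectation against test functions in $L^2_\mathbf{P}$ is bounded using $\mathbf{E}|u(s,x)|^p\lesssim (1+s)^{-p/2}$-type decay; in $d\ge3$ the integral $\int_T^\infty s^{-3/2}\cdot$(mass-loss) should then be small. This decay of moments of $u(s,x)$ follows from comparison with the heat flow, i.e. $|u|\le |p_s*u_0|$ pointwise, whose variance is $\sim s^{-d/2}$.

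For (i), I would show $\sigma_\lambda^2\le \mathbf{E}[\Phi_\infty(0)]\cdot(\ldots)$ and that $\Phi_t\to0$ as $\lambda\to\infty$, using $u^2\gtrsim$ a positive fraction on a set of positive density together with Jensen-type arguments. For (ii), $c_\lambda=\lim \mathbf{E}[\Phi_t]/\sigma_\lambda$, obtained by computing $\mathrm{Cov}(u(t,0),\xi(\cdot))=\mathbf{E}[D u]$. Here $c_\lambda>0$ by positivity of $Du$, and for small $\lambda$ one has $\mathbf{E}\Phi$ close to $1$, while strict inequality $c_\lambda<1$ comes from the variance of $\Phi_\infty$ being nonzero for $\lambda>0$, via Cauchy–Schwarz being strict.
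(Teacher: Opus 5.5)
\textbf{Gaps in (i) and (ii).}
For (ii), in your own framework $c_\lambda\sigma_\lambda=\mathbf{E}[\Phi_\infty(0)]$. Also, $\sigma_\lambda^2$ is a quadratic expression in $\Phi_\infty(0)$ that equals $(\mathbf{E}\Phi_\infty(0))^2$ exactly when $\Phi_\infty(0)$ is a.s.\ constant. So $c_\lambda<1$ is \emph{equivalent} to non-degeneracy of $\Phi_\infty(0)$. Invoking ``strict Cauchy--Schwarz'' only restates this. Nothing in your proposal shows that this non-explicit $t\to\infty$ limit actually fluctuates, and $\mathbf{E}\Phi\approx 1$ for small $\lambda$ does not help. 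This non-degeneracy is the real content of the creation of noise. The paper proves it, for small $\lambda$, through the expansion $u_\eps=X_\eps-\lambda\mathcal N_\eps(X_\eps,X_\eps,X_\eps)+\lambda^2R_\eps$. It combines an explicit lower bound $\mathbf{E}|\Pi_3\mathcal N_\eps(X_\eps,X_\eps,X_\eps)(t,x)|^2\gtrsim t^{-d/2}$ with a uniform bound $\mathbf{E}[R_\eps(t,x)^2]\lesssim t^{-d/2}$, which yields $\liminf_\eps\mathbf{E}|\Pi_3u_\eps(t,x)|^2>0$.

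For (i), Jensen bounds $\mathbf{E}[e^{-X}]$ from \emph{below}, so it cannot show $\Phi_t\to0$. Moreover, ``$u^2\gtrsim$ a positive fraction on a set of positive density'' cannot hold uniformly as $\lambda\to\infty$, since $u$ itself shrinks like $(\lambda t)^{-1/2}$. The missing quantitative input is the coming-down estimate of Lemma~\ref{lem:coming-down}. For instance, $\int\mathbf{E}[D_zu(1,0)]^2\,dz\le\mathbf{E}[u(1,0)^2]\lesssim\lambda^{-1}$ together with $0\le D_zu(1,0)\le p_1*\rho(-z)$ gives $\mathbf{E}[\Phi_1(0)]=\int\mathbf{E}[D_zu(1,0)]\,dz\lesssim R^{d/2}\lambda^{-1/2}+e^{-cR^2}$. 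Hence $\sigma_\lambda^2\le\mathbf{E}[\Phi_1(0)^2]\le\mathbf{E}[\Phi_1(0)]\to0$; compare Lemma~\ref{lem:upgraded-cd}.

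Also, positivity of $Du$ only gives $\sigma_\lambda,c_\lambda\ge0$. Strict positivity needs the Feynman--Kac/Jensen bound $\mathbf{E}[D_yu(t,x)]\ge p_t*\rho(x-y)\exp(-3\lambda\int_0^\infty\mathbf{E}[u(s,0)^2]\,ds)$, and this is where $d\ge3$ enters.

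\textbf{The convergence part.}
This is a genuinely different and viable route. Pathwise monotonicity of $\Phi_t(y)$ gives existence of $\sigma_\lambda$ directly, whereas the paper extracts subsequences $s_n\to\infty$ and recovers uniqueness from the weak limit. The second-order Poincar\'e inequality is a reasonable substitute for the moment-method CLT of Theorem~\ref{thm:clt} fed by Lemma~\ref{lem:npoint}. By contrast, integrability of the two-point function alone, which you invoke for $v^{(T)}$, does not give a CLT for a nonlinear functional. Several steps need repair:
\begin{itemize}
\item Clark--Ocone is unavailable here, since there is no filtration. Use instead the Ornstein--Uhlenbeck formula $\mathrm{Cov}(F,G)=\int_0^\infty e^{-\tau}\mathbf{E}\langle DF,P_\tau DG\rangle_{\mathcal H}\,d\tau$. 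With stationarity it gives $\int\mathrm{Cov}(u(t,0),u(t,x))\,dx=\int_0^\infty e^{-\tau}\mathbf{E}[(P_{\tau/2}\Phi_t(0))^2]\,d\tau$, which is nonincreasing in $t$ because $P_\tau$ preserves positivity.
\item The bound $|u|\le|p_s*u_0|$ is false for sign-changing data. Only $|u|\le p_s*|u_0|$ holds, and it gives no decay. The decay $\mathbf{E}|u(s,x)|^p\lesssim(1+s)^{-pd/4}$ comes instead from Poincar\'e and $0\le Du\le p_s*\rho$.
\item Pointwise moments cannot control the Duhamel term. Bounding $\|\langle u_\eps(r)^3,\psi\rangle\|_{L^2_{\mathbf P}}$ by $\|\psi\|_{L^1}(r+\eps^2)^{-3d/4}$ leads to a bound of order $\lambda\eps^{-d/2}T^{1-3d/4}$, which blows up as $\eps\to0$. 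You must exploit decorrelation of $u^3$. The paper does this by applying Poincar\'e to $\langle u_\eps(r)^3,\psi\rangle$, obtaining $\mathbf{E}[u_\eps(r,0)^4]\,\|p_r*\rho_\eps*|\psi|\|_{L^2}^2$.
\item Alternatively, your mass-loss idea can be made exact. The function $D_y(v^{(T)}-u)\ge0$ has total mass $\Phi_T(y)-\Phi_t(y)$. Poincar\'e plus Cauchy--Schwarz then give $\mathbf{E}\langle u_\eps(t)-v^{(T)}_\eps(t),\phi\rangle^2\le\|\phi\|_{L^2}^2\,\mathbf{E}[\Phi_T(0)-\Phi_\infty(0)]$.
\end{itemize}
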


\begin{remark}
Note that the convergence in law result is for positive time only, because by definition $u_\eps$ at time $0$ is given.
Also, we note the following implicit representation holds for the limiting variance
$$\sigma^2_\lambda = \lim_{t \to \infty } \int_{\mathbb{R}^d} \mathbf{E}[u(t,x)u(t,0)]\dd x,$$
where $u$ is the unscaled solution of Equation~\eqref{eq:AC}. We are not aware of a more direct representation.
\end{remark}
We refer to \ref{item:nontriv} and \ref{item:creation} respectively as non-triviality of the variance 
and creation of noise. This is explained in more detail in Section \ref{subsec:nontriv}.

Let us point out that aside from the monotonicity and oddness of the non-linearity, most of the 
arguments used in the proof of Theorem \ref{thm:Main} are quite robust. They should apply to a larger 
class of problems at supercriticality without too much effort. In particular, it should be straightforward 
to extend the result to the case of initial data arising from a Poisson point process, or to a 
discretized version of the problem with i.i.d. initial data, as long as the law of the initial data 
is centered and symmetric. We note that the method here also extends directly to the case of the non-linearity $-\lambda u^{2k + 1}$ for some $k \geq 1$, in the appropriate
supercritical dimensions.

Finally, the case of long-range initial condition is also of interest. The case of integrable correlations
is a straightforward extension of the result. We believe a similar result should hold in the case of non-integrable correlations
with a supercritical power-law decay, where the limiting effective initial data would be a fractional Gaussian field. 

Of course, it would also be interesting to explore related directions of work that go beyond the 
maximum principle. For example, a similar result for the Cahn-Hilliard equation or the unforced 
AKPZ equation would be very interesting. Another problem of interest would be to replicate the 
reasoning here in cases of equations with additive noise. For example, the $\Phi^4$ equation in 
dimension $d\geq 5$ is supercritical, and one expects a similar result to hold. However, the presence 
of renormalization makes the use of a maximum principle delicate and the problem much more challenging.

\subsection{Notations and conventions}

We write $\mathbf{E}$ for the expectation with respect to the probability measure $\mathbf{P}$, and $\mathbb{E}_x$ for the expectation with respect to a Brownian motion started at $x$.
We denote by $\mathcal{H} = L^2(\mathbb{R}^d)$ the Hilbert space associated to the white noise $\xi$, with inner product $\langle \cdot, \cdot \rangle_{\mathcal{H}}$.

We denote by $p_t(x) = (4\pi t)^{-d/2} e^{-|x|^2/(4t)}$ the heat kernel, and by $p_t * f$ the convolution with $f$.
The mollifier $\rho$ is a smooth, non-negative, compactly supported function with $\int_{\mathbb{R}^d} \rho(x) \dd x = 1$, and we set $\tilde{\rho}(x) = \rho(-x)$. For $\eps > 0$, we define
\[\rho_\eps(x) = \frac{1}{\eps^d}\rho\left(\frac{x}{\eps}\right), \quad \lambda_\eps = \lambda \eps^{d - 2}.\]

We write $\mathcal{S}'(\mathbb{R}^d)$ for the space of tempered distributions.
The Malliavin derivative is denoted by $D$ (or $D_y$ when evaluated at $y \in \mathbb{R}^d$), and $\mathbb{D}^{1,p}$ denotes the associated Sobolev space. We denote by $\mathcal{H}_n$ the $n$-th homogeneous Wiener chaos and by $\Pi_n$ the orthogonal projection onto $\mathcal{H}_n$.

Finally, we use the notation $f \lesssim g$ (resp.\ $f \gtrsim g$) to indicate that there exists a constant $C > 0$, depending only on the dimension $d$ and the mollifier $\rho$, such that $f \leq C g$ (resp.\ $f \geq C g$). Dependence on additional parameters is indicated by subscripts, e.g.\ $\lesssim_p$. We write $f \asymp g$ when both $f \lesssim g$ and $f \gtrsim g$ hold.

\section{Outline of the proof}\label{sec:outline}
In this section, we give an overview of the main steps of the proof of Theorem \ref{thm:Main}.
\subsection{Heuristics behind the mechanism of convergence}
To argue heuristically why the solution to \eqref{eq:AC} converges to a Gaussian field at large 
scales, let us start by writing the mild formulation of the equation, starting from some time $t \in (0, T)$
\begin{equation}\label{eq:mild}
  u(T,x) = p_{T-t} * u(t,\cdot)(x) - \lambda \int_t^T (p_{T-s} * u(s,\cdot)^3)(x) \dd s,\ x \in \mathbb{R}^d,
\end{equation}
and let us make the natural assumption that at time $t$,  $u(t,\cdot)$ is correlated on length scales 
of order $O(\sqrt{t})$, and is of order $O(t^{-\frac{d}{4}})$ pointwise --- which is the typical size 
suggested by the naïve scaling. In particular, this implies that the cubic non-linearity $u(t,\cdot)^3$ 
is of order $O(t^{-\frac{3d}{4}})$ pointwise, and is correlated on length-scales of order $O(\sqrt{t})$ as well.
This suggests that the variance of the non-linear part in \eqref{eq:mild} can be estimated as follows
\begin{align}
  & \mathbf{E}\left[\left( p_{T-s} * u(s,\cdot)^3(x)\right)^2\right] \\
  =\ & \int_{\mathbb{R}^d} \int_{\mathbb{R}^d} p_{T-s}(x-y) p_{T-s}(x-z) \mathbf{E}\left(u(s,y)^3 u(s,z)^3\right) \dd y \dd z \\
  \approx\ & \int_{\mathbb{R}^d} \int_{\mathbb{R}^d} p_{T-s}(x-y) p_{T-s}(x-z) s^{-d} p_s(y-z) \dd y \dd z 
    \sim s^{-d} T^{-\frac{d}{2}},
\end{align}
where we used the approximation $\mathbf{E}\left(u(s,y)^3 u(s,z)^3\right) \approx s^{-\frac{3d}{2}} 
\mathbf{1}_{|y-z|\lesssim \sqrt{s}} \approx s^{-d} p_s(y-z)$ following the assumptions on the size and 
correlation length, as well as the fact that $u(t,x)^3$ is centered. 

Thus, the contribution in $L^2_\mathbf{P}$ of the non-linear part of the dynamics between time $t$ 
and $T$ to $u(T,x)$ is of order
\begin{equation}
  T^{-d/4} \int_t^T O(s^{-d/2}) \dd s = T^{-d/4} \times 
  \begin{cases}
    O(\log(T/t)) & \text{ if } d=2,\\
    O(t^{1-d/2}) & \text{ if } d\geq 3.
  \end{cases}
\end{equation}
In the case $d \geq 3$, this suggests that the non-linear part is initially of the same order as the 
linear part, but as the time $t$ gets larger, the non-linear part has a contribution that gets smaller 
and smaller. Hence, at time $T \gg t \gg 1$, the solution $u(T,x)$ is well-approximated in 
$L^2_\mathbf{P}$ by the linear part alone, which is a spatial average of $u(t,\cdot)$ on large scales. 
At this step, the stationarity in space of the field $u(t,\cdot)$ would allow us to invoke a central 
limit theorem for spatial averages to conclude that $u(T,x)$ is approximately Gaussian, provided 
good mixing properties of $u(t,\cdot)$. This discussion further applies to several time points $T_1, \ldots, T_n \gg t $, thus hints at the convergence of the finite-dimensional distributions to those of a Gaussian process.
As one may notice, this heuristic relies mildly 
on the specifics of the problem. In particular, the vanishing of the non-linear contribution at large 
times corresponds precisely to supercriticality. The key to make this rigorous lies in proving that 
the assumptions made on the size of the field $u(t,\cdot)$ and its correlation properties at time 
$t>0$ hold. Namely, one needs to show that during the time where the non-linearity is relevant, the 
non-linearity hurts neither the size nor the decay of correlations of the field. 
This is the main bottleneck that prevents direct application of the argument to other supercritical 
problems. 

\subsection{Non-triviality of the variance and the correlations}\label{subsec:nontriv}
Let us now briefly explain the mechanism behind the non-triviality of the variance and the correlations. Both properties stem from the fact that the non-linear part of the dynamics, though vanishing 
at large times, is still relevant for times $t = O(1)$. Since the non-linearity affects what happens 
at those short times, it has a lasting effect on the large scale limit. Hence a simplified picture 
of the problem would be to brutally approximate the discretized dynamics by the following.
Since $u(0,\cdot)$ is smooth and correlated on length-scales of order $O(1)$, we may assume that for 
times of order $O(1)$ that are not too large, the diffusion has not had time to act yet, and thus the 
dynamics is dominated by the non-linear part. Thus, in this time layer, we may approximate the 
dynamics by an ODE flow at each point in space, namely
\begin{equation}
  \partial_t u_\text{ODE}(t,x) = -\lambda u_\text{ODE}(t,x)^3, \quad u_\text{ODE}(0,x) = u(0,x).
\end{equation}
This flow is explicit and we have
\begin{equation}
  u_\text{ODE}(1,x) = \Phi_\lambda\left(u(0,x)\right), \text{ where } \Phi_\lambda(u) = \frac{u}{\sqrt{1 + 2\lambda u^2}}.
\end{equation}
If one now pretends that after this time layer, the approximation of the dynamics by the heat equation is exact, we get that at time $T \gg 1$,
\begin{equation}
  u(T,x) \approx p_{T-1} * u_\text{ODE}(1,\cdot)(x) = \left(p_{T-1} * \Phi_\lambda\left(u(0,\cdot)\right)\right)(x),
\end{equation}
for which it is clear that both non-triviality of the variance and creation of noise hold. Indeed $\Phi_\lambda$ is a non-linear function and genuinely depends on $\lambda$.
Thus, having in mind the standard Central Limit Theorem for sums of i.i.d. random variables, the joint convergence of 
$p_{T} * u_\eps(0,\cdot)(x)$ with $u_\eps(T,x)$ to a Gaussian vector with non-trivial correlations is reminiscent to that of
\begin{equation}
  \left(\frac{1}{\sqrt{n}} \sum_{i=1}^n X_i, \frac{1}{\sqrt{n}} \sum_{i=1}^n  \Phi_\lambda(X_i) \right) \underset{N\rightarrow +\infty}{\Longrightarrow} \mathcal{N}\left(0, \begin{pmatrix}
    \mathbf{E}[X^2] & \mathbf{E}[X \Phi_\lambda(X)] \\
    \mathbf{E}[X\Phi_\lambda(X)] & \mathbf{E}[\Phi_\lambda(X)^2]
  \end{pmatrix}\right).
\end{equation}
Of course, this picture is formal, but still it captures the main mechanism behind these two properties. 
Let us also mention that for the critical case, the scaling invariance of the equation prevents such 
a time layer to appear, at least not to leading order in the weak-coupling. In that case, the 
non-linearity and the diffusion really are interacting. The diffusion part creates Gaussianity,
to leading order in the weak coupling right away, but cannot erase the effect of the non-linearity 
on the size of the solution. We will further comment on the critical case in Subsection \ref{subsec:non_triv-proof}.

Unfortunately, the proofs we found for these two properties are not based on this intuition, but 
rather on more technical arguments. A caveat is that the creation of noise is only rigorously derived for small 
enough $\lambda$ even though from the above we expect that it should hold for all $\lambda > 0$.
Namely, we show that for $\lambda$ small enough, the projection on the third Wiener chaos of the solution does not vanish in the limit.
Let us note that the method we use may be adapted to prove that for any $k \geq 1$, the projection on the $2k+1$ chaos is non-vanishing in the limit, provided $\lambda$ is small enough (depending on $k$).
In this small-coupling regime, this thus shows a really ``truly chaotic'' nature of the limit.

\subsection{Main tools}
The first ingredient to make the above heuristics rigorous is the use of the maximum principle 
combined with Poincaré-type inequalities for functionals of Gaussian fields. This idea was introduced 
in \cite{dunlapcastillo} to study the weakly-coupled critical Allen-Cahn equation, and relies on the 
simple observation that the Malliavin derivative solves the linearized equation around the solution.
Deferring to Subsection \ref{subsec:malliavin} for more details on the Malliavin calculus setting, 
let us already state the following equation for the Malliavin derivative of the solution to \eqref{eq:AC}.
\begin{proposition}\label{prop:malliavin_derivative}
  Let $u$ be the solution to \eqref{eq:AC} with initial condition as above. Then, for any $t>0$ and 
  $x\in \mathbb{R}^d$, $u(t,x)$ is Malliavin differentiable. Furthermore, for all $y\in \mathbb{R}^d$ 
  the Malliavin derivative $D_{y} u$ solves
  \begin{equation}
    \partial_t D_{y} u(t,x) = \Delta D_{y} u(t,x) - 3\lambda u(t,x)^2 D_{y} u(t,x), \quad D_{y} u(0,x) = \rho(x-y).
  \end{equation}
  At the level of the rescaled solution $u_\eps$ defined in \eqref{eq:rescaled_u}, we have for any 
  $\eps > 0$, $t>0$ and $x,y\in \mathbb{R}^d$,
  \begin{equation}\label{eq:malliavin_derivative_rescaled}
    \begin{split}
      \partial_t D_{y} u_\eps(t,x) & = \Delta D_{y} u_\eps(t,x) - 3\lambda \eps^{d-2} u_\eps(t,x)^2 D_{y} u_\eps(t,x),\\ \quad D_{y} u_\eps(0,x) & = \rho_\eps(x-y).
    \end{split}
  \end{equation}
\end{proposition}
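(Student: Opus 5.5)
The plan is to view the solution map $\xi \mapsto u(t,x)$ as a functional on the Gaussian space $\mathcal{H} = L^2(\mathbb{R}^d)$ and show it is Fréchet differentiable along $\mathcal{H}$. The Malliavin derivative is then the Fréchet derivative in the direction $h$, which gives the linearized equation. Concretely, for $h \in \mathcal{H}$ let $u^h$ be the solution of \eqref{eq:AC} started from $\rho * (\xi + h)$. Note that $\rho * h$ is smooth and bounded, with $\|\rho*h\|_\infty \le \|\rho\|_{L^2}\|h\|_{\mathcal{H}}$.

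\textbf{Step 1: well-posedness and a priori bounds.} Since $u(0,\cdot) = \rho*\xi$ is smooth with at most logarithmic growth almost surely, a global classical solution exists. Because the nonlinearity is monotone and odd, the comparison principle gives $|u(t,x)| \le (p_t * |u(0)|)(x)$. It also gives the explicit bound $|u(t,x)| \le (2\lambda t)^{-1/2}$, obtained by comparing with the ODE solution started from $+\infty$. Taking differences, $w = u^h - u$ solves $\partial_t w = \Delta w - \lambda (u^2 + u u^h + (u^h)^2) w$. The coefficient $u^2 + u u^h + (u^h)^2$ is nonnegative, so the maximum principle yields
\[
|u^h(t,x) - u(t,x)| \le p_t * |\rho * h|(x) \le \|\rho\|_{L^2} \|h\|_{\mathcal{H}}.
\]
Thus $u(t,x)$ is Lipschitz in $\xi$ along $\mathcal{H}$ with a deterministic constant. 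Combined with $u(t,x) \in L^2_\mathbf{P}$, which follows from the Gaussian moments of $p_t*|u(0)|$, this already implies $u(t,x) \in \mathbb{D}^{1,2}$ by the standard characterization of $\mathbb{D}^{1,2}$ for Lipschitz functionals (Nualart, Prop.~1.5.5 type results). Moreover $\|Du(t,x)\|_{\mathcal{H}} \le \|\rho\|_{L^2}$.

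\textbf{Step 2: identifying the derivative.} Let $v^h$ solve the linear equation $\partial_t v = \Delta v - 3\lambda u^2 v$ with $v(0) = \rho * h$. Feynman–Kac, or the maximum principle, gives $|v^h| \le p_t * |\rho*h|$. Set $r = u^{\delta h} - u - \delta v^h$. Its equation is
\[
\partial_t r = \Delta r - 3\lambda u^2 r - \lambda\bigl(3u w^2 + w^3\bigr), \qquad w = u^{\delta h} - u .
\]
By Step 1, $|w| \le \delta C\|h\|$, and $u$ is bounded on $[\tau,t]$ by $(2\lambda\tau)^{-1/2}$. It is also locally bounded near time $0$ by $p_s * |u(0)|$. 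Duhamel then gives $|r(t,x)| \lesssim \delta^2 \|h\|^2 \int_0^t (p_{t-s} * (|u(s)| + \delta))\,ds$, which is $O(\delta^2)$ almost surely. So $\frac{d}{d\delta} u^{\delta h}(t,x)\big|_{\delta=0} = v^h(t,x)$. Since $h \mapsto v^h(t,x)$ is linear and bounded, it is represented as $\langle D u(t,x), h\rangle_{\mathcal{H}}$ with $D_y u(t,x) = \Phi(t,x;y)$, where $\Phi$ solves the same linear equation with initial datum $\rho(\cdot - y)$. This uses linearity and superposition, $v^h = \int \Phi(\cdot;y) h(y)\,dy$. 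That $u(t,x)$ is Malliavin differentiable with derivative given by this directional derivative follows from the fact that for $\mathbb{D}^{1,2}$ functionals, $\langle DF, h\rangle$ equals the $L^2_\mathbf{P}$-limit of the difference quotients along Cameron–Martin shifts. This gives the first claim.

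\textbf{Step 3: rescaling.} Write $u_\eps(t,x) = \eps^{-d/2} u(\eps^{-2}t, \eps^{-1}x)$, realised with white noise $\xi_\eps(\cdot) = \eps^{-d/2}\xi(\eps^{-1}\cdot)$ so that $u_\eps(0) = \rho_\eps * \xi_\eps$. Applying Step 2 to \eqref{eq:rescaled_AC}, whose coupling is $\lambda_\eps = \lambda\eps^{d-2}$ and mollifier is $\rho_\eps$, gives \eqref{eq:malliavin_derivative_rescaled} directly. The argument of Steps 1–2 used only monotonicity, and $\rho_\eps$ plays the role of $\rho$.

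The main obstacle is the rigorous link between the pathwise directional derivative and the Malliavin derivative. I would handle it through the Lipschitz criterion, with the deterministic bound of Step 1 and a dominated convergence argument for the difference quotients, rather than through a chaos computation.
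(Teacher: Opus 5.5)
Your proof is correct, but it passes from the pathwise analysis to Malliavin calculus differently from the paper.

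The paper, in Appendix~B, constructs the solution map on the exponentially weighted space $X_a$, with existence obtained via a truncated nonlinearity and compactness. It then shows that $J_{t,x}\colon u_0\mapsto u(t,x)$ is Fréchet differentiable on all of $X_a$, with derivative given by the linearized equation; the remainder is controlled by the same maximum-principle computation you perform for $r$. Finally it applies the chain rule to $u(t,x)=J_{t,x}(\rho*\xi)$, which is where the sub-exponential growth of $\rho*\xi$ enters.

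You instead perturb only along Cameron--Martin directions, where $\rho*h$ is bounded, so the maximum principle yields the deterministic Lipschitz constant $\|\rho\|_{L^2}$. Membership in $\mathbb{D}^{1,2}$, in fact in every $\mathbb{D}^{1,p}$, then follows from the Enchev--Stroock criterion for $\mathcal{H}$-Lipschitz functionals. The derivative is identified with the pathwise directional derivative through the Cameron--Martin formula and integration by parts, and the uniform bound on the difference quotients supplies the dominated convergence.

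Your route never needs a chain rule for a random element of the large Banach space $X_a$, and it gives $\|Du(t,x)\|_{\mathcal{H}}\le\|\rho\|_{L^2}$ for free, which overlaps with Corollary~\ref{cor:max_principle}. The paper's route gives the stronger deterministic fact of differentiability in all $X_a$ directions. It also supplies the well-posedness that you take as given.

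A minor point: the bound $|u(t,x)|\le(2\lambda t)^{-1/2}$ needs an approximation argument, since $u(0)$ is unbounded. You do not actually need it, because $|u(s)|\le p_s*|u(0)|$ already gives $\int_0^t p_{t-s}*|u(s)|\,\dd s\le t\,(p_t*|u(0)|)(x)$.
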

From this equation, one gets immediately from the maximum principle (crucially, from the positivity 
of $u_\eps(t,x)^2$) that the following holds.
\begin{corollary}\label{cor:max_principle}
  Under the same notations, we have for any $\eps > 0$, $t>0$ and $x,y\in \mathbb{R}^d$, the estimate
  \begin{equation}
    0\leq D_{y} u_\eps(t,x) \leq p_t * \rho_\eps (x-y).
  \end{equation}
  Consequently, for any $p\geq1$, we get the following bounds on the $p$-th moment of $Du(t,x)$ and 
  $\langle Du(t,\cdot),\phi\rangle$ where $\phi$ is any test function.
  \begin{align*}
    \mathbf{E}\left[ \|D u_\eps(t,x)\|_{\mathcal{H}}^p\right] & \leq \|p_t * \rho_\eps\|_{L^2(\mathbb{R}^d)}^p,\\ 
    \mathbf{E}\left[ \|\langle D u_\eps(t,\cdot),\phi\rangle\|_\mathcal{H}^p\right] 
    & \leq \|p_t * \rho_\eps * |\phi|\|_{L^2(\mathbb{R}^d)}^p.
  \end{align*}
\end{corollary}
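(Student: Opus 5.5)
The plan is to read the linearised equation \eqref{eq:malliavin_derivative_rescaled} of Proposition~\ref{prop:malliavin_derivative} as a linear heat equation with a non-negative killing potential, and to get both bounds from a comparison argument. Fix $y$, and set $w(t,x) = D_y u_\eps(t,x)$ and $V(t,x) = 3\lambda\eps^{d-2}u_\eps(t,x)^2 \ge 0$. Then $w$ solves $\partial_t w = \Delta w - V w$ with $w(0,\cdot) = \rho_\eps(\cdot - y) \ge 0$. Along each realisation $u_\eps$ is smooth and locally bounded, so $V$ is a continuous, locally bounded potential. I would therefore use the Feynman--Kac representation
\begin{equation}
  w(t,x) = \mathbb{E}_x\Big[\rho_\eps(B_{2t}-y)\exp\Big(-\int_0^t V(t-s,B_{2s})\,\dd s\Big)\Big],
\end{equation}
where $B$ is a Brownian motion, time-changed to match the generator $\Delta$. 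Non-negativity of $\rho_\eps$ gives $w \ge 0$. Since $V\ge 0$, the exponential is at most $1$, so $w(t,x) \le \mathbb{E}_x[\rho_\eps(B_{2t}-y)] = p_t*\rho_\eps(x-y)$.

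Alternatively, one can argue with the maximum principle directly. First, $w\ge0$ is the standard comparison for linear parabolic equations with bounded-below potential. Second, $z = p_t*\rho_\eps(\cdot-y) - w$ satisfies $\partial_t z - \Delta z = Vw \ge 0$ with $z(0)=0$, so $z\ge 0$. The step needing the most care is justifying the maximum principle, or Feynman--Kac, on the unbounded domain $\mathbb{R}^d$. The solution $u_\eps$, and hence $V$, is only known to grow at most like the initial Gaussian field, i.e.\ polynomially or logarithmically at infinity almost surely. One also needs $w$ to have at most Gaussian growth, so that the Phragmén--Lindelöf-type maximum principle applies. I would handle this by noting that the Feynman--Kac formula itself defines a bounded solution. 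The uniqueness of bounded solutions of the linear equation then identifies it with $D_y u_\eps$, which is the solution from Proposition~\ref{prop:malliavin_derivative}.

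The moment bounds follow pathwise. Squaring the deterministic bound $0 \le D_y u_\eps(t,x) \le p_t*\rho_\eps(x-y)$ and integrating in $y$ gives
\begin{equation}
  \|Du_\eps(t,x)\|_{\mathcal{H}}^2 = \int_{\mathbb{R}^d} |D_y u_\eps(t,x)|^2\,\dd y \le \|p_t*\rho_\eps\|_{L^2(\mathbb{R}^d)}^2
\end{equation}
almost surely. Raising this to the power $p/2$ and taking expectations yields the first bound.

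For the second bound, I would write $D_y\langle u_\eps(t,\cdot),\phi\rangle = \int D_y u_\eps(t,x)\phi(x)\,\dd x$, which is valid by closability of $D$ and Fubini. Then
\begin{equation}
  |D_y\langle u_\eps(t,\cdot),\phi\rangle| \le \int p_t*\rho_\eps(x-y)|\phi(x)|\,\dd x.
\end{equation}
The right-hand side is $(p_t*\tilde\rho_\eps*|\phi|)$ evaluated at $y$, up to a reflection $y\mapsto -y$ applied to the kernel. Since $p_t$ is even, this reflection does not change the $L^2$ norm in $y$; replacing $\rho_\eps$ by $\tilde\rho_\eps$ only changes notation. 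Taking the $\mathcal{H}$-norm and then $p$-th moments gives the second estimate.
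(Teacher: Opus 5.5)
Your proposal is correct and is essentially the paper's argument: the paper gets $0\le D_y u_\varepsilon(t,x)\le p_t*\rho_\varepsilon(x-y)$ directly from the maximum principle for the linearised equation, using only that the potential $3\lambda\varepsilon^{d-2}u_\varepsilon^2$ is non-negative (with growth and uniqueness deferred to Appendix~\ref{app:det}), and the moment bounds then follow pathwise exactly as you describe. One small correction: the identity $\|p_t*\tilde\rho_\varepsilon*|\phi|\|_{L^2}=\|p_t*\rho_\varepsilon*|\phi|\|_{L^2}$ does not come from $p_t$ being even or from a change of notation ($\rho$ is not assumed symmetric); it comes from Plancherel, since $|\widehat{\tilde\rho_\varepsilon}|=|\widehat{\rho_\varepsilon}|$ for real $\rho$.
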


Moreover, as a direct consequence of the fact that the initial condition is stationary and centered, 
we note that the field is stationary and symmetric in space at all times, i.e. for any $t > 0, x\in \mathbb{R}^d$, we have that 
$$u_\eps(t,\cdot-x) \overset{\text{law}}{=} u_\eps(t,\cdot) \overset{\text{law}}{=} - u_\eps(t,\cdot).$$
In Section \ref{subsec:estimates}, we combine the above observations with functional inequalities on the Wiener space providing us with an explicit control on the size and correlation 
properties of the field $u(t,\cdot)$. This part of the argument is quite robust with respect to the 
type of noise in the initial data, as long as appropriate functional inequalities are available.

The second ingredient is a Central Limit Theorem for stationary fields with short-range correlations.
We use here a version adapted to the specific decorrelation estimates we obtain from the first 
ingredient, and which is stated in Appendix \ref{app:clt}. Our proof is based on the moment method.

\section{Preliminaries}
In this section, we gather several preliminary results towards the proof of Theorem \ref{thm:Main}, 
which we leave to the further Section \ref{sec:proof}.

\subsection{The Malliavin calculus setting}\label{subsec:malliavin}
Let us here introduce the setting of Malliavin calculus on the Gaussian space generated by the initial 
condition of \eqref{eq:AC}, namely, the precise framework in which Proposition 
\ref{prop:malliavin_derivative} and Corollary \ref{cor:max_principle} hold.
Let $\xi$ be a white noise on $\mathbb{R}^d$, defined on some probability space 
$(\Omega, \mathcal{F}, \mathbf{P})$. We denote by $\mathcal{H} = L^2(\mathbb{R}^d)$ the Hilbert space 
associated to $\xi$, and by $W = \{W(h): h\in \mathcal{H}\}$ the isonormal Gaussian process associated 
to $\xi$, i.e. the centered Gaussian family such that $\mathbf{E}[W(h) W(g)] = \langle h,g\rangle_{\mathcal{H}}$ 
for all $h,g\in \mathcal{H}$. In particular, we have that $\xi$ can be identified with $W$ through 
the relation $\xi(\phi) = W(\phi)$ for all $\phi \in \mathcal{H}$.

For any smooth and cylindrical random variables $F$ (i.e. random variables of the form
$F = f(W(h_1), \ldots, W(h_n))$, for $n\geq 1$, $h_1, \ldots, h_n \in \mathcal{H}$ and $f\in C_b^\infty(\mathbb{R}^n)$),
the Malliavin derivative of $F$ is the $\mathcal{H}$-valued random variable defined as
\begin{equation}
  D F = \sum_{i=1}^n \partial_i f(W(h_1), \ldots, W(h_n)) h_i.
\end{equation}
The operator $D$ is closable from $L^p(\Omega)$ to $L^p(\Omega; \mathcal{H})$ for any $p\geq 1$, and 
we denote by $\mathbb{D}^{1,p}$ the closure of smooth cylindrical random variables with respect to the norm
\begin{equation}
  \|F\|_{1,p} = \left( \mathbf{E}[|F|^p] + \mathbf{E}[\|D F\|_{\mathcal{H}}^p] \right)^{\frac{1}{p}}.
\end{equation}
The space $L^2(\Omega)$ admits the following orthogonal decomposition, known as the Wiener chaos decomposition
\begin{equation}
  L^2(\Omega) = \bigoplus_{n=0}^\infty \mathcal{H}_n,
\end{equation}
into the so-called homogeneous Wiener chaoses $\mathcal{H}_n$, which are the closed linear subspaces 
of $L^2(\Omega)$ generated by the random variables of the form $H_n(W(h))$ for some $h\in \mathcal{H}$ 
with $\|h\|_{\mathcal{H}} = 1$, where $H_n$ is the $n$-th Hermite polynomial. 
In particular, we have that $\mathcal{H}_0 = \mathbb{R}$ and $\mathcal{H}_1 = \{W(h): h\in \mathcal{H}\}$.
For $n\geq 0$ we denote by $\Pi_n$ the orthogonal projection onto $\mathcal{H}_n$.

We refer to the standard monograph \cite{Nualart} for further details. Aside from the basic definitions 
stated above and the product rule, we will use the following functional inequalities on the Wiener space.
In particular, we use the following Poincaré-type inequality which follows directly from Meyer’s 
inequalities, a reference for which can be found in \cite{Nualart}.
\begin{proposition}[$L^p$-Poincaré inequality]\label{prop:poincare}
Let $p>1$. There exists a constant $C_p<\infty$ such that, for every centered $F\in\mathbb D^{1,p}$,
$$\|F\|_{L^p(\Omega)} \le C_p \|DF\|_{L^p(\Omega;\mathcal H)}.$$
\end{proposition}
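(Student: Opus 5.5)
The plan is to derive the $L^p$-Poincaré inequality from Meyer's inequalities via the Ornstein--Uhlenbeck semigroup. Let $L$ denote the Ornstein--Uhlenbeck generator, with $L = -\sum_n n\Pi_n$ on the chaos decomposition. On centered variables ($\Pi_0 F = 0$) the operator $(-L)^{1/2}$ is invertible and $(-L)^{-1/2} = \sum_{n\ge1} n^{-1/2}\Pi_n$.

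\textbf{Step 1 (reduction).} Meyer's inequality, as stated in \cite{Nualart}, gives for $p>1$ a two-sided equivalence
\[
\|DF\|_{L^p(\Omega;\mathcal H)} \asymp_p \|(-L)^{1/2}F\|_{L^p(\Omega)}
\]
for $F$ in the appropriate domain. I only need the lower bound $\|(-L)^{1/2}F\|_{L^p} \le c_p\|DF\|_{L^p}$. Set $G=(-L)^{1/2}F$. Then $G$ is centered, and the claim reduces to showing that $(-L)^{-1/2}$ is bounded on centered elements of $L^p(\Omega)$.

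\textbf{Step 2 (boundedness of $(-L)^{-1/2}$).} I would use the subordination formula
\[
(-L)^{-1/2}G=\frac{1}{\Gamma(1/2)}\int_0^\infty t^{-1/2}P_tG\,\dd t,
\]
which can be checked chaos by chaos. The hypercontractivity or spectral-gap estimate gives $\|P_tG\|_{L^p}\lesssim_p e^{-c_pt}\|G\|_{L^p}$ for centered $G$ and $p>1$. For $p=2$ this is immediate with $c=1$. For general $p$, I would combine Nelson hypercontractivity (to pass from $L^p$ to $L^2$ after time $t_0(p)$) with the $L^2$ gap, and interpolate or use duality for $p<2$; alternatively, I would cite the standard statement in \cite{Nualart}. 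Since $t^{-1/2}e^{-c_pt}$ is integrable and $\|P_t\|_{L^p\to L^p}\le1$ for small $t$, the integral converges in $L^p$ norm, which gives boundedness.

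\textbf{Step 3 (density).} I would prove the inequality first for centered smooth cylindrical $F$ (or finite chaos sums), where all manipulations are legitimate, and then extend to centered $F\in\mathbb D^{1,p}$ by closability of $D$ and density.

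The main obstacle is the exponential $L^p$ decay of $P_t$ on centered variables for $p\neq2$. I would handle it via hypercontractivity: $\|P_tG\|_{L^p}\le \|P_{t-t_0}G\|_{L^2}$ for $p>2$, and symmetrically by duality for $p<2$. Everything else is a direct citation of Meyer's inequalities.
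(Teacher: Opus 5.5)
Your proposal is correct and takes essentially the paper's approach: the paper only asserts that the inequality ``follows directly from Meyer's inequalities'' with a pointer to \cite{Nualart}. You fill in the step it leaves implicit, namely that $(-L)^{-1/2}$ is bounded on centered elements of $L^p(\Omega)$, via the subordination formula and the exponential $L^p$ decay of $P_t$ on centered variables (hypercontractivity for $p>2$, duality for $p<2$), followed by a density argument.
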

We will also need the associated covariance inequality which follows directly from the Helffer-Sjöstrand 
representation of the covariance. A reference for which can be found in \cite{duerinck-nonperturbative}.
\begin{proposition}[Covariance inequality]\label{prop:cov}
  For any $F,G \in \mathbb{D}^{1,2}$ centered, we have
  \begin{equation}
    |\mathbf{E}[FG]| \leq \int_{\mathbb{R}^d} \sqrt{\mathbf{E}[|D_x F|^2]} \sqrt{\mathbf{E}[|D_x G|^2]} \dd x.
  \end{equation}
\end{proposition}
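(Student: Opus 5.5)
We derive the covariance inequality from the Helffer--Sjöstrand (equivalently, Ornstein--Uhlenbeck semigroup) representation of the covariance on Wiener space. Let $P_s = e^{-sL}$ be the Ornstein--Uhlenbeck semigroup associated with $W$, where $L = \delta D$ is the number operator, acting on $\mathcal{H}_n$ as multiplication by $n$. For centered $F,G\in\mathbb{D}^{1,2}$ we will first establish
\begin{equation}
  \mathbf{E}[FG] = \int_0^\infty e^{-s}\, \mathbf{E}\big[\langle DF, P_s DG\rangle_{\mathcal{H}}\big] \dd s ,
\end{equation}
where $P_s$ acts componentwise on the $\mathcal{H}$-valued variable $DG$. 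To prove it, decompose $F=\sum_{n\ge1}\Pi_nF$ and $G=\sum_{n\ge1}\Pi_nG$ into chaoses; by centering, no $n=0$ term appears. We then use two facts: $D\Pi_n G = \Pi_{n-1} DG$ (componentwise), and $\mathbf{E}\langle D\Pi_nF, D\Pi_nG\rangle_{\mathcal{H}} = n\,\mathbf{E}[\Pi_nF\,\Pi_nG]$ (integration by parts, $\mathbf{E}\langle DF,DG\rangle = \mathbf{E}[F\,LG]$). The right-hand side then becomes
\[
\sum_{n\ge1}\int_0^\infty e^{-s}e^{-(n-1)s}\, n\,\mathbf{E}[\Pi_nF\,\Pi_nG]\dd s=\sum_{n\ge 1}\mathbf{E}[\Pi_nF\,\Pi_nG]=\mathbf{E}[FG].
\]
Exchanging the sum and the integral is justified by $F,G\in\mathbb{D}^{1,2}$, so that $\sum_n n\|\Pi_nF\|^2<\infty$, together with Cauchy--Schwarz.

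Next we bound the integrand pointwise in $x$. Writing the inner product as an integral, Fubini gives
\[
\mathbf{E}\langle DF,P_sDG\rangle_{\mathcal{H}}=\int_{\mathbb{R}^d}\mathbf{E}[D_xF\,P_s(D_xG)]\dd x,
\]
valid since $DF,DG\in L^2(\Omega\times\mathbb{R}^d)$. For each $x$, Cauchy--Schwarz in $\Omega$ and the $L^2(\Omega)$-contractivity of $P_s$ (Mehler's formula, or directly from its action on chaoses) give
\[
|\mathbf{E}[D_xF\,P_sD_xG]|\le \sqrt{\mathbf{E}|D_xF|^2}\sqrt{\mathbf{E}|D_xG|^2}.
\]
Integrating in $x$ and then in $s$ against $e^{-s}\dd s$, which has total mass $1$, yields the claimed inequality.

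The main technical point is making sense of $D_xF$ pointwise in $x$ and of $P_s$ acting on the family $(D_xG)_x$. We handle this by viewing $DG\in L^2(\Omega;\mathcal{H})\cong L^2(\mathbb{R}^d;L^2(\Omega))$, so that $x\mapsto D_xG$ is an a.e.-defined $L^2(\Omega)$-valued function and $P_s$ acts fiberwise. The identity $P_s D = e^{s}DP_s$, which follows from $D\Pi_n=\Pi_{n-1}D$, is the algebraic input making the chaos computation above rigorous. Alternatively, one can prove everything first for smooth cylindrical $F,G$, where all objects are explicit, and conclude by density in $\mathbb{D}^{1,2}$: both sides of the inequality are continuous in the $\|\cdot\|_{1,2}$ norm, since the right-hand side is bounded by $\|DF\|_{L^2}\|DG\|_{L^2}$.
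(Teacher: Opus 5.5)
Your proof is correct and takes the same approach as the paper. The paper simply cites the Helffer--Sj\"ostrand representation $\mathbf{E}[FG]=\mathbf{E}\langle DF,(1+L)^{-1}DG\rangle_{\mathcal{H}}$, where $(1+L)^{-1}=\int_0^\infty e^{-s}P_s\,\dd s$; you additionally verify this identity via the chaos decomposition and then conclude, as intended, by the fiberwise $L^2(\Omega)$-contractivity of $P_s$ together with Cauchy--Schwarz.
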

With the above inequalities in mind, Proposition \ref{prop:malliavin_derivative} and Corollary 
\ref{cor:max_principle} follow from the results in Appendix~\ref{app:det}, together with the chain rule for Malliavin derivatives.
Note that this requires sub-exponential growth of mollified white noise, which is standard.

\subsection{Estimates on the rescaled solutions}\label{subsec:estimates}

Utilizing the tools provided by Malliavin calculus, we provide some necessary estimates on the rescaled solution $u_\eps$
as defined in \eqref{eq:rescaled_u}.

As a consequence of Corollary~\ref{cor:max_principle} and the Poincaré Inequality~\ref{prop:poincare},
we have trivially the upper bound
\begin{equation}\label{eq:upper-Lp-est}
  \mathbf{E}[\langle u_\eps(t, \cdot), \phi\rangle^p] 
    \lesssim \|p_t * \rho_\eps * |\phi|\|_{L^2(\mathbb{R}^d)}^p.
\end{equation}
We show that the reverse inequality also holds for \(p = 2\).

\begin{lemma}\label{lem:non-degen}
  For any \(\lambda > 0\), any positive test function \(\phi\) and \(t > 0\), we have that
  \[\mathbf{E}[\<u_\eps(t, \cdot), \phi\>^2] \asymp \|p_t * \rho_\eps * |\phi|\|_{L^2(\mathbb{R}^d)}^2.\]
\end{lemma}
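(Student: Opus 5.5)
The plan is to bound the second moment from below by the first Wiener chaos, and then to bound the expected Malliavin derivative from below using a Feynman--Kac representation and Jensen's inequality. The upper bound is already \eqref{eq:upper-Lp-est} with $p=2$, so only the lower bound needs proof. For that I would write $F = \langle u_\eps(t,\cdot),\phi\rangle$, which is centered by the symmetry $u_\eps \overset{\text{law}}{=} -u_\eps$. By orthogonality of the chaos decomposition, $\mathbf{E}[F^2] \geq \mathbf{E}[(\Pi_1 F)^2] = \|\mathbf{E}[DF]\|_{\mathcal{H}}^2$, using the standard identity $\Pi_1 F = W(\mathbf{E}[DF])$ (Stroock's formula). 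Since $D_y F = \int D_y u_\eps(t,x)\phi(x)\dd x$ and $\phi\geq 0$, it then suffices to prove the pointwise lower bound
\begin{equation}
  \mathbf{E}[D_y u_\eps(t,x)] \geq c_\lambda\, p_t*\rho_\eps(x-y),
\end{equation}
with $c_\lambda>0$ independent of $\eps,t,x,y$. Indeed, all functions involved are nonnegative, so integrating against $\phi$ and taking the $L^2$ norm gives $\|\mathbf{E}[DF]\|_{\mathcal{H}}^2 \geq c_\lambda^2 \|p_t*\rho_\eps*\phi\|_{L^2}^2$.

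To obtain the pointwise bound, I would represent the solution of the linear equation \eqref{eq:malliavin_derivative_rescaled} through the Feynman--Kac formula:
\begin{equation}
  D_y u_\eps(t,x) = \mathbb{E}_x\Big[\rho_\eps(B_{2t}' - y)\exp\Big(-3\lambda_\eps\int_0^t u_\eps(t-s,B'_{2s})^2\dd s\Big)\Big],
\end{equation}
where $B'_{2\cdot}$ is a Brownian motion with generator $\Delta$. This is legitimate because the potential $3\lambda_\eps u_\eps^2$ is nonnegative and locally bounded with sub-exponential growth; this is the same input as in Appendix~\ref{app:det}. Taking $\mathbf{E}$ and using Fubini (everything is nonnegative), I would apply Jensen's inequality to the convex map $\exp$ under $\mathbf{P}$ for each fixed Brownian path. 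By stationarity, $\mathbf{E}[u_\eps(r,z)^2] = m_\eps(r)$ does not depend on $z$, so this gives
\begin{equation}
  \mathbf{E}[D_y u_\eps(t,x)] \geq p_t*\rho_\eps(x-y)\,\exp\Big(-3\lambda_\eps\int_0^t m_\eps(r)\dd r\Big).
\end{equation}

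Finally, the upper bound \eqref{eq:upper-Lp-est} with $\phi=\delta$ (or directly Corollary~\ref{cor:max_principle} combined with Proposition~\ref{prop:poincare}) gives $m_\eps(r)\lesssim \|p_r*\rho_\eps\|_{L^2}^2 \lesssim (r+\eps^2)^{-d/2}$. Since $d\geq 3$, integrating yields
\begin{equation}
  \lambda_\eps\int_0^\infty (r+\eps^2)^{-d/2}\dd r \lesssim \lambda\eps^{d-2}\eps^{2-d} = \lambda,
\end{equation}
uniformly in $\eps$ and $t$. Hence one may take $c_\lambda = e^{-C\lambda}$, which finishes the proof. This is exactly where supercriticality enters: in $d=2$ the same integral would diverge logarithmically. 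The step I expect to need the most care is justifying the Feynman--Kac representation and the Fubini/Jensen interchange for the random, unbounded potential. If that proves awkward, a fallback is to work with a truncated potential $\min(u_\eps^2, M)$, which gives a comparison solution lying below $D_y u_\eps$ for each $M$, and then let $M\to\infty$ by monotone convergence.
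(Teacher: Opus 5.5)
Your proposal is correct and follows the paper's proof essentially step for step. The chain is the same: the lower bound by the first-chaos projection $\|\mathbf{E}[DF]\|_{\mathcal{H}}^2$, then the Feynman--Kac representation of $D_y u_\eps$ with Jensen's inequality and stationarity, then the Poincar\'e bound $\mathbf{E}[u_\eps(s,0)^2]\lesssim (s+\eps^2)^{-d/2}$, whose time integral against $\lambda\eps^{d-2}$ stays bounded precisely because $d\ge 3$. Your explicit constant $e^{-C\lambda}$ simply makes visible the $\lambda$-dependence that the paper's $\gtrsim$ leaves implicit.
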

\begin{proof}
  Projecting \(u_\eps(t, x)\)
  onto the first Wiener chaos, we have by the It\^o isometry that
  \begin{equation}\label{eq:malliavin-lower}
    \begin{split}
      \mathbf{E}[\<u_\eps(t, \cdot), \phi\>^2]
        & \ge \mathbf{E}[\<\Pi_1 u_\eps(t, \cdot), \phi\>^2]\\
        & = \int \left(\int \mathbf{E}[D_y u_\eps(t, x)]\phi(x) \dd x\right)^2 \dd y.
    \end{split}
  \end{equation}
  Thus, as \(D_y u_\eps(t, x)\) is positive, it suffices to estimate \(\mathbf{E}[D_y u_\eps(t, x)]\)
  from below.
  
  To do so, by applying the Feynman-Kac formula on \eqref{eq:malliavin_derivative_rescaled}, we obtain
  \[\mathbf{E}[D_y u_\eps(t, x)]
   = \mathbf{E}\mathbb{E}_x\left[\exp\left(- 3 \lambda \eps^{d - 2}
    \int_0^t u_\eps(s, B_{t - s})^2 \dd s\right) \rho_\eps(y - B_t)\right]\]
  with \(B\) being a Brownian motion starting from \(x\) and \(\mathbb{E}_x\) denoting the
  expectation with respect to \(B\). Thus, applying Jensen's inequality, we have by stationarity that
  \begin{equation}\label{eq:malliavin-lower'}
    \mathbf{E}[D_y u_\eps(t, x)] \ge p_t * \rho_\eps(y - x)
      \exp\left(- 3 \lambda \eps^{d - 2} \int_0^t \mathbf{E}[u_\eps(s, 0)^2] \dd s \right).
  \end{equation}

  Now, as \(\mathbf{E}[u_\eps(s, 0)^2] \lesssim \|p_s * \rho_\eps\|^2_{L^2(\mathbb{R}^d)}\)
  by the Poincaré Inequality~\ref{prop:poincare} and Corollary~\ref{cor:max_principle}, we have by
  Young's convolution inequality that 
  \[\mathbf{E}[u_\eps(s, 0)^2]
  \lesssim \|p_s\|_2^2 \|\rho_\eps\|_1^2 \wedge \|p_s\|_1^2 \|\rho_\eps\|_2^2 \lesssim 
    s^{-\frac{d}{2}} \wedge \eps^{-d} \lesssim (s+\eps^2)^{-\frac{d}{2}}.\]
  As $d\geq 3$, we have that \(\eps^{d - 2} \int_0^t \mathbf{E}[u_\eps(s, 0)^2] \dd s
    \lesssim \eps^{d - 2} \int_0^t (s+\eps^2)^{-\frac{d}{2}} \dd s \lesssim 1\)
  from which \eqref{eq:malliavin-lower'} provides the lower bound
  \[\mathbf{E}[D_y u_\eps(t, x)] \gtrsim p_t * \rho_\eps(y - x).\]
  Hence, substituting this back into \eqref{eq:malliavin-lower} concludes the proof.
\end{proof}
Let us also prove here the following lemma, that controls the spatial decay of correlations of multi-point evaluations of $u_\eps$.
\begin{lemma}\label{lem:npoint}
  For any \(t > \eps^2\) and  any \(x_1, \cdots, x_{p},y_1,\cdots,y_q \in \mathbb{R}^d\), we have that
  \begin{align}
  &\left|\mathbf{E}\left[\prod_{i=1}^p u_\eps(t, x_i) \prod_{k=1}^q u_\eps(t, y_k)\right]-\mathbf{E}\left[\prod_{i=1}^p u_\eps(t, x_i)\right]\mathbf{E}\left[\prod_{k=1}^q u_\eps(t, y_k)\right]\right|\\
  &\lesssim_{p,q} t^{-\frac{(p+q)d}{4}} \exp\left(-\frac{\underset{i,k}{\inf}\|x_i - y_k\|^2}{Ct}\right). \end{align}
  \end{lemma}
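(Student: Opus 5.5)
Set $F=\prod_{i=1}^p u_\eps(t,x_i)$ and $G=\prod_{k=1}^q u_\eps(t,y_k)$. The plan is to apply the covariance inequality (Proposition~\ref{prop:cov}) to the centered variables $F-\mathbf{E}F$ and $G-\mathbf{E}G$. Their Malliavin derivatives coincide with $DF$ and $DG$, so
\[
|\mathrm{Cov}(F,G)|\le \int_{\mathbb{R}^d}\sqrt{\mathbf{E}[|D_zF|^2]}\sqrt{\mathbf{E}[|D_zG|^2]}\,\dd z .
\]
The product rule gives $D_zF=\sum_{i}D_zu_\eps(t,x_i)\prod_{j\ne i}u_\eps(t,x_j)$. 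By Corollary~\ref{cor:max_principle} the derivatives satisfy the pointwise deterministic bound $0\le D_zu_\eps(t,x_i)\le p_t*\rho_\eps(x_i-z)$. Hence
\[
\mathbf{E}[|D_zF|^2]\lesssim_p \sum_i \big(p_t*\rho_\eps(x_i-z)\big)^2\,\mathbf{E}\Big[\prod_{j\ne i}u_\eps(t,x_j)^2\Big].
\]
By H\"older and the moment bound \eqref{eq:upper-Lp-est} (pointwise version, i.e.\ the Poincaré inequality combined with Corollary~\ref{cor:max_principle}), the last expectation is $\lesssim_p \|p_t*\rho_\eps\|_{L^2}^{2(p-1)}$. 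Since $t>\eps^2$, this norm is $\lesssim (t+\eps^2)^{-d/4}\lesssim t^{-d/4}$. It is essential here that the bound on $D$ is almost sure, so no moment of $D$ is lost in the H\"older step.

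Next I need Gaussian tails for the kernel. Because $\rho$ has compact support of size $O(1)$, the function $\rho_\eps$ is supported in a ball of radius $O(\eps)\le O(\sqrt t)$. Therefore $p_t*\rho_\eps(w)\lesssim t^{-d/2}\exp(-|w|^2/(Ct))$, obtained by comparing the heat kernel to one with a slightly larger diffusivity and absorbing the $O(\sqrt t)$ shift. Combining this with the previous step, the covariance is bounded by
\[
\sum_{i,k} t^{-\frac{(p-1)d}{4}}\,t^{-\frac{(q-1)d}{4}}\int t^{-d/2}e^{-|x_i-z|^2/(Ct)}\,t^{-d/2}e^{-|y_k-z|^2/(Ct)}\,\dd z .
\]
The $z$-integral is a Gaussian convolution and equals, up to constants, $t^{-d/2}\exp(-|x_i-y_k|^2/(2Ct))$. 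Collecting powers gives $t^{-(p+q-2)d/4-d/2}=t^{-(p+q)d/4}$, multiplied by a sum of exponentials. Each exponential is bounded by $\exp(-\inf_{i,k}|x_i-y_k|^2/(C't))$, which yields the claim after enlarging $C$.

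The only mildly delicate points are the following:
\begin{itemize}
\item One needs $F,G\in\mathbb{D}^{1,2}$. Products of the $u_\eps$ are in $\mathbb{D}^{1,r}$ for every $r$ by Proposition~\ref{prop:malliavin_derivative} and the moment bounds, so this follows by standard approximation.
\item The Gaussian-tail bound on $p_t*\rho_\eps$ must hold uniformly in $\eps^2<t$, which is where the assumption $t>\eps^2$ enters.
\end{itemize}
I expect the main obstacle to be that Gaussian-tail estimate, together with keeping the constants independent of $\eps$. Everything else is bookkeeping with the product rule and H\"older's inequality.
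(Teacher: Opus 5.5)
Your proposal is correct and follows the paper's proof: the covariance inequality, the product rule with the almost-sure bound from Corollary~\ref{cor:max_principle}, H\"older together with the Poincar\'e moment bounds, and a Chapman--Kolmogorov (Gaussian convolution) step in $z$. The only cosmetic difference is that you first dominate $p_t*\rho_\varepsilon$ by a wider Gaussian, using $t>\varepsilon^2$ and the compact support of $\rho$, before convolving; the paper convolves directly and leaves that tail estimate implicit.
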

\begin{proof}
The terms of which we want to evaluate the covariance are in $ \mathbb{D}^{1,2}$, hence we can apply the Covariance Inequality \ref{prop:cov} to reduce the problem to bounding
\[ \int \sqrt{\mathbf{E}\left[\left|D_x\prod_{i=1}^p u_\eps(t, x_i)\right|^2\right]}\sqrt{\mathbf{E}\left[\left|D_x\prod_{k=1}^q u_\eps(t, y_k)\right|^2\right]}\dd x .\]
Now note that the product rule for Malliavin derivatives, Equation~\eqref{eq:upper-Lp-est}, together with the Cauchy-Schwarz inequality yields
\[\mathbf{E}[|D_x(u_\eps(t, x_1) \cdots u_\eps(t, x_{p}))|^2] \lesssim_{p} t^{-\frac{(p-1)d}{2}} \sum_{i=1}^p (p_t * \rho_\eps(x-x_i))^2,\]
as well as a similar bound for the second term. Injecting this back up in the above, together with Chapman-Kolmogorov, concludes the proof of the bound.
\end{proof}

\begin{remark}
Note that by scaling, the exact same bound holds for $u(t,x)$ whenever $t>1$. In particular, for any $t>1$, $u(t,\cdot)$ satisfies the assumptions of Theorem~\ref{thm:clt} from Appendix~\ref{app:clt}.
\end{remark}

To argue for the non-triviality, we will make use of the following coming-down from infinity property for this equation.

\begin{lemma}\label{lem:coming-down}
  One has that for any  \(t > 0\) ,
  \begin{equation}\label{eq:descent}
    \mathbf{E}[u_\eps(t,0)^2] \lesssim \frac{1}{\lambda_\eps t}.
  \end{equation}
\end{lemma}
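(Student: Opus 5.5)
The plan is to derive a differential inequality for the second moment $m(t) = \mathbf{E}[u_\eps(t,0)^2]$ by differentiating in time and exploiting the cubic damping, then to compare with the ODE $\dot y = -c\lambda_\eps y^2$. This ODE ``comes down from infinity'': whatever the initial value, $y(t) \le 1/(c\lambda_\eps t)$.

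\textbf{Step 1: the moment identity.} By stationarity in space, $m(t) = \mathbf{E}[u_\eps(t,x)^2]$ for every $x$. Using \eqref{eq:rescaled_AC} and It\^o-free (deterministic-in-time) calculus, we obtain
\[
\frac{d}{dt} m(t) = 2\mathbf{E}[u_\eps \Delta u_\eps](t,0) - 2\lambda_\eps \mathbf{E}[u_\eps(t,0)^4].
\]
The Laplacian term can be handled by stationarity: $\mathbf{E}[u_\eps \Delta u_\eps](t,0) = -\mathbf{E}[|\nabla u_\eps(t,0)|^2] \le 0$, since $\mathbf{E}[\Delta(u_\eps^2)/2] = \Delta \mathbf{E}[u_\eps^2]/2 = 0$. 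Jensen's inequality then gives $\mathbf{E}[u_\eps^4] \ge m^2$, so that
\[
\dot m \le -2\lambda_\eps m^2 .
\]
Integrating $\frac{d}{dt}(1/m) \ge 2\lambda_\eps$ yields $1/m(t) \ge 1/m(0) + 2\lambda_\eps t \ge 2\lambda_\eps t$, which is exactly \eqref{eq:descent}, even with an explicit constant.

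\textbf{Step 2: justification (the main obstacle).} The main technical point is rigor: we need differentiability of $m$ and the ability to exchange expectation with derivatives, which requires integrability of $u_\eps$, $\nabla u_\eps$, $\Delta u_\eps$ and $u_\eps^4$ on $[\delta,t]$, uniformly in $x$. I would first try to obtain this from the moment bounds \eqref{eq:upper-Lp-est} together with parabolic regularity, writing $\nabla u_\eps(t) = \nabla p_{t-s} * u_\eps(s) - \lambda_\eps\int \nabla p_{t-r}*u_\eps^3\,dr$ and using that $u_\eps$ has all moments bounded uniformly in space for $s>0$.

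To avoid needing $\Delta u_\eps$ directly, the argument can be run in integrated form. For a fixed $x$, set $m_\delta(t) = \mathbf{E}[(p_\delta * u_\eps(t))(x)^2]$; alternatively, and more simply, integrate the identity against time and use the weak formulation with a spatial test function $\phi\ge0$ of unit mass. Stationarity makes $\int \mathbf{E}[u_\eps^2]\phi$ independent of $\phi$, and the Laplacian term $\int \mathbf{E}[u_\eps^2]\Delta\phi = 0$ vanishes exactly, while the gradient term keeps its sign.

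Finally, if $m(0)=\infty$ is an issue for small $\eps$, none of the above matters, because the comparison only uses $1/m(0)\ge 0$. It therefore suffices to start the comparison at a time $\delta>0$ and let $\delta\downarrow 0$, which gives $m(t)\le 1/(2\lambda_\eps(t-\delta))$ and hence the claim.
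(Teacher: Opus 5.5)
Your proposal is correct and follows essentially the paper's argument: an $L^2$ energy estimate in which stationarity kills the diffusion contribution, then Jensen's inequality $\mathbf{E}[u_\varepsilon^4]\ge \mathbf{E}[u_\varepsilon^2]^2$ and ODE comparison. The paper removes the diffusion term with a unit-mass weight $w$ and integration by parts, which is exactly your weak-formulation variant, and it passes silently over the exchange of expectation and derivatives that you take care to justify. Note that $m(0)=\varepsilon^{-d}\|\rho\|_{L^2}^2<\infty$, so your $m(0)=\infty$ concern never arises, though starting the comparison at $\delta>0$ is harmless.
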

\begin{proof}
  Taking \(w : \mathbb{R}^d \to \mathbb{R}\) a smooth non-negative function which satisfies
  \(\int w(x) \dd x = 1\), we denote \(L^2_w\) the space of square integrable functions with
  respect to the measure \(w(x) \dd x\). We have by integration by parts that
  {\hfuzz=20pt
  \begin{align*}
    \frac{1}{2}\frac{ \dd }{\dd t} \|u_\eps(t, \cdot)\|_{L_w^2}^2
      & = \<u_\eps(t, \cdot), \Delta u_\eps(t, \cdot)\>_{L_w^2} - \lambda_\eps \int u_\eps(t,\cdot)^4 \dd w\\
      & = - \|\nabla u_\eps(t, \cdot)\|_{L_w^2}^2 - \int u_\eps(t, x) \nabla u_\eps(t, \cdot) \cdot \nabla w(x) \dd x
          - \lambda_\eps \int u_\eps(t, \cdot)^4 \dd w\\
      & \le - \frac{1}{2}\<\nabla u_\eps(t, \cdot)^2, \nabla w\>_{L^2} - \lambda_\eps \int u_\eps(t, \cdot )^4 \dd w.
  \end{align*}}
  Thus, taking expectation, we have by stationarity that
  \begin{align*}
    \frac{1}{2}\frac{ \dd }{\dd t} \mathbf{E}[u_\eps(t, 0)^2] & \le
      - \frac{1}{2}\<\nabla \mathbf{E}[u_\eps(t, 0)^2], \nabla w\>_{L^2} - \lambda_\eps \mathbf{E}[u_\eps(t, 0)^4] \int w(x) \dd x\\
      & = - \lambda_\eps \mathbf{E}[u_\eps(t, 0)^4] \le - \lambda_\eps \mathbf{E}[u_\eps(t, 0)^2]^2.
  \end{align*}
  Consequently, we obtain \eqref{eq:descent} by ODE comparison.
\end{proof}

\section{Proof of Theorem~\ref{thm:Main}}\label{sec:proof}
In this section we combine the preliminary results derived above to conclude Theorem~\ref{thm:Main}.
\subsection{Proof of the convergence}

We begin by proving the Central Limit Theorem part of the result. In particular, by first showing the convergence of \(u_\eps\) 
in law when tested, we can conclude the convergence part of Theorem~\ref{thm:Main} by a simple 
application of Mitoma's criterion \cite{Mitoma1983} thanks to Lemma~\ref{lem:non-degen}.

\begin{lemma}
  Let $d\geq 3$ and defining $u_\eps$ as in \eqref{eq:rescaled_u}, we have that 
  $$\left(\<u_\eps(t_1, \cdot), \phi_1\>, \dots, \<u_\eps(t_n, \cdot), \phi_n\>\right) \underset{\eps \to 0}{\implies} \left(\<\sigma_\lambda p_{t_1} * \tilde{\xi}, \phi_1\>, \dots, \<\sigma_\lambda p_{t_n} * \tilde{\xi}, \phi_n\>\right)$$
  for any \(\phi_1, \dots, \phi_n \in \mathcal{S}\) and \(t_1, \dots, t_n > 0\) where $\tilde{\xi}$ is a spatial white noise and $\sigma_\lambda$ is a positive constant. 
\end{lemma}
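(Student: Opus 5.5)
The plan is to make the heuristic of Section~\ref{sec:outline} rigorous in three steps. First, introduce an intermediate macroscopic time $s \in (0, \min_i t_i)$ and compare $u_\eps(t_i)$ with the linear evolution $p_{t_i - s} * u_\eps(s)$ in $L^2_\mathbf{P}$. Second, apply the CLT of Appendix~\ref{app:clt} to these linear functionals of the stationary field $u_\eps(s,\cdot)$. Third, let $s \downarrow 0$ to identify the limit and the constant $\sigma_\lambda$.

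\textbf{Step 1: the nonlinear remainder is small.} By the mild formulation \eqref{eq:mild} in rescaled variables,
\[\<u_\eps(t_i),\phi_i\> - \<p_{t_i-s}*u_\eps(s),\phi_i\> = -\lambda_\eps \int_s^{t_i} \<p_{t_i-r}*u_\eps(r)^3,\phi_i\>\dd r.\]
By oddness and stationarity, $u_\eps(r,\cdot)^3$ is centered. I would bound its two-point correlation using Lemma~\ref{lem:npoint} with $p=q=3$, or directly with the covariance inequality Proposition~\ref{prop:cov} together with Corollary~\ref{cor:max_principle}. This gives
\[\big|\mathbf{E}[u_\eps(r,y)^3u_\eps(r,z)^3]\big| \lesssim r^{-d}\,p_{Cr}*\rho_\eps*\rho_\eps(y-z),\]
valid also for $r \lesssim \eps^2$ if $r$ is replaced by $r+\eps^2$. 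Minkowski's inequality then bounds the $L^2_\mathbf{P}$ norm of the remainder by
\[\lambda_\eps \int_s^{t} (r+\eps^2)^{-d/2}\,\dd r \lesssim \lambda\, \eps^{d-2}\,(s+\eps^2)^{1-d/2}.\]
This is not small uniformly in $s$; it is only $O(\lambda)$ for $s \sim \eps^2$. The correct scale for $s$ is therefore microscopic: $s = \eps^2 S$ with $S$ large but fixed, which makes the remainder $O(\lambda S^{1-d/2})$ uniformly in $\eps$. The approximation error then vanishes as $S \to \infty$, uniformly in $\eps$.

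\textbf{Step 2: CLT at fixed $S$.} Undo the scaling: $\<p_{t_i - \eps^2 S} * u_\eps(\eps^2 S),\phi_i\>$ equals $\eps^{d/2}\int u(S,y)\,\psi_{i,\eps}(\eps y)\,\dd y$, where $\psi_{i,\eps} = p_{t_i-\eps^2S}*\phi_i \to p_{t_i}*\phi_i$ in $\mathcal{S}$. By the Remark following Lemma~\ref{lem:npoint}, $u(S,\cdot)$ for $S>1$ is stationary, centered, has all moments, and has Gaussian-decaying mixed cumulant bounds, so Theorem~\ref{thm:clt} applies. Jointly in $i$, these vectors converge to centered Gaussians with covariance $\sigma_S^2\<p_{t_i}*\phi_i,p_{t_j}*\phi_j\>$, where
\[\sigma_S^2 = \int \mathbf{E}[u(S,x)u(S,0)]\,\dd x.\]
This integral is finite by Lemma~\ref{lem:npoint}.

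\textbf{Step 3: identify $\sigma_\lambda$ and conclude.} By Step 1, the family $\sigma_S$ is Cauchy as $S \to \infty$: the difference between two values of $S$ controls the $L^2$ gap between the corresponding linear approximations. Lemma~\ref{lem:non-degen} gives $\sigma_S \gtrsim 1$, uniformly in $S$, so $\sigma_\lambda := \lim_S \sigma_S > 0$. A standard approximation argument then finishes the proof: uniform $L^2$ closeness plus convergence in law of the approximants gives convergence in law of $\<u_\eps(t_i),\phi_i\>$ to the Gaussian vector with covariance $\sigma_\lambda^2\<p_{t_i}*\phi_i,p_{t_j}*\phi_j\>$, which is exactly the law of $\<\sigma_\lambda p_{t_i}*\tilde\xi,\phi_i\>$.

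The main obstacle I expect is Step 1, specifically the correlation bound on $u^3$. Lemma~\ref{lem:npoint} is stated for $t > \eps^2$ and gives only Gaussian decay at scale $\sqrt{t}$, which is the content needed. Care is required to integrate this bound against the heat kernels and obtain exactly the $r^{-d}\,p_r$ structure, so that the time integral $\int_S^\infty r^{-d/2}\,\dd r$ converges, which uses $d \ge 3$.
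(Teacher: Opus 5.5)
Your proposal is correct and follows essentially the paper's proof: you compare with the linear evolution from the microscopic time $\varepsilon^2 S$ via the mild formulation (error $O(\lambda (S+1)^{1-d/2})$ uniformly in $\varepsilon$), apply the appendix CLT to the unscaled stationary field $u(S,\cdot)$, and let $S\to\infty$, where your Cauchy argument for $\sigma_S$ is a slightly cleaner substitute for the paper's extraction of a convergent subsequence of covariance matrices. One correction to your motivation in Step 1: for fixed macroscopic $s$ your remainder bound $\lambda\varepsilon^{d-2}(s+\varepsilon^2)^{1-d/2}$ actually tends to zero, and the real reason to work at time $\varepsilon^2 S$ is that at macroscopic $s$ the field $u_\varepsilon(s,\cdot)$ is correlated on the macroscopic scale $\sqrt{s}$, so no CLT is available there (this is also why the ``let $s\downarrow 0$'' in your overview should read $S\to\infty$).
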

\begin{proof}
  For each \(i = 1, \dots, n\), we compare \(\<u_\eps(t_i, \cdot), \phi_i\>\) and 
  \(\<u_\eps(s\eps^2, \cdot), p_{t_i - s\eps^2} * \phi_i\>\) from which we show that the latter 
  converges to the corresponding Gaussian random variable. 
  
  In particular, for \(t > 0\) and \(\phi \in \mathcal{S}\), by writing $u_\eps$ in mild form, we have that 
  \begin{align*}
    & \left\|\<u_\eps(t, \cdot), \phi\> - \<u_\eps(s\eps^2, \cdot), p_{t - s\eps^2} * \phi\>\right\|_{L^2(\mathbf{P})}\\
    =\ & \lambda \eps^{d - 2} \mathbf{E}\left[\left|\int_{s\eps^2}^{t} \<u_\eps(r, \cdot)^3, p_{t - r} *\phi\> \dd r\right|^2\right]^{\frac{1}{2}}\\
    \le\ & \lambda \eps^{d - 2} \int_{s\eps^2}^{t} \mathbf{E}[|\<u_\eps(r, \cdot)^3, p_{t - r} *\phi\>|^2]^{\frac{1}{2}} \dd r
  \end{align*}
  and we will estimate the integrand on the right hand side. Applying the Poincaré inequality and the point-wise
  estimate for the Malliavin derivative (Corollary~\ref{cor:max_principle}), we have that
  \begin{align*}
    & \mathbf{E}[|\<u_\eps(r, \cdot)^3, p_{t - r} *\phi\>|^2] 
      \le \mathbf{E}[\|D\<u_\eps(r, \cdot)^3, p_{t - r} *\phi\>\|_{\CH}^2]\\
    \le\ & 9 \int \mathbf{E}[u_\eps(r, x_1)^2 u_\eps(r, x_2)^2] \\
    & \hspace{0.5cm} (p_r * \rho_\eps)(x_1 - y) (p_r * \rho_\eps)(x_2 - y) (p_{t - r} * |\phi|)(x_1) (p_{t - r} * |\phi|)(x_2) \dd x_1 \dd x_2 \dd y\\
    \le\ & 9 \mathbf{E}[u_\eps(r, 0)^4] \|p_r * \rho_\eps * p_{t - r} * |\phi|\|^2_{L^2(\mathbb{R}^d)}
    \lesssim \|p_r * \rho_\eps\|^4_{L^2(\mathbb{R}^d)} \|p_{t} * \rho_\eps * |\phi|\|^2_{L^2(\mathbb{R}^d)}.
  \end{align*}
  Thus, substituting this back, we obtain
  \begin{equation}\label{eq:mild-estimate}
    \begin{split}
      & \left\|\<u_\eps(t, \cdot), \phi\> - \<u_\eps(s\eps^2, \cdot), p_{t - s\eps^2} * \phi\>\right\|_{L^2(\mathbf{P})} \\
      \lesssim\ & \lambda \eps^{d - 2} \|p_{t} * \rho_\eps * |\phi|\|_{L^2(\mathbb{R}^d)} \int_{s\eps^2}^{t} \|p_r * \rho_\eps\|^2_{L^2(\mathbb{R}^d)} \dd r\\
      \lesssim\ & \lambda \eps^{d - 2} \|p_{t} * \rho_\eps * |\phi|\|_{L^2(\mathbb{R}^d)} \int_{s\eps^2}^{t} (\eps^2 + r)^{-\frac{d}{2}} \dd r
    \end{split}
  \end{equation}
  for which the right hand side is bounded by 
  \(\frac{2\lambda}{d - 2}\|p_{t} * |\phi|\|_{L^2(\mathbb{R}^d)} (s + 1)^{-\frac{d}{2} + 1}\).
  
  On the other hand, as a consequence of the Central Limit Theorem~\ref{thm:clt} and the 
  estimate Lemma~\ref{lem:npoint}, we have that 
  \begin{equation}\label{eq:clt-estimate}
    \left(\<u_\eps(s\eps^2, \cdot), p_{t_i - s\eps^2} * \phi_i\>\right)_{i = 1}^n \underset{\eps\to 0}{\Longrightarrow} 
    \mathcal{N}(0, \Sigma_{s, t}^2(\Phi))
  \end{equation}
  where we introduced the positive semi-definite matrix \(\Sigma_{s, t}^2(\Phi) \in \mathbb{R}^{n \times n}\) with 
  coordinates given by
  \[\Sigma_{s, t}^2(\Phi)_{i, j} := \<p_{t_i + t_j} * \phi_i, \phi_j\> \int \mathbf{E}[u(s, 0) u(s, x)] \dd x.\]
  Moreover, utilizing the Covariance Inequality~\ref{prop:cov}, we have that
  \begin{align*}
    \sup_{s}\ \Sigma_{s, t}^2(\Phi)_{i, j}
    & \le \left|\<p_{t_i + t_j} * \phi_i, \phi_j\>\right| \int \int \sqrt{\mathbf{E}[|D_z u(s, 0)|^2]} \sqrt{\mathbf{E}[|D_z u(s, x)|^2]} \dd z \dd x\\
    & \le \left|\<p_{t_i + t_j} * \phi_i, \phi_j\>\right| \int \int (p_s * \rho)(z) (p_s * \rho)(z - x) \dd z \dd x\\[1.5mm]
    & = \left|\<p_{t_i + t_j} * \phi_i, \phi_j\>\right| < \infty.
  \end{align*}
  Thus, we may extract a subsequence \(s_n \to \infty\) such that the limit \(\lim_{n \to \infty} \Sigma_{s_n, t}^2(\Phi)\) 
  exists and we denote it by \(\Sigma_t^2(\Phi)\). Thus, denoting \(d_P\) for the L\'evy-Prokhorov metric, 
  we have that
  \begin{align}
    & d_P((\<u_\eps(t_i, \cdot), \phi_i\>)_{i = 1}^n, \mathcal{N}(0, \Sigma_t^2(\Phi)))\\
    \le\ & d_P((\<u_\eps(t_i, \cdot), \phi_i\>)_{i = 1}^n, (\<u_\eps(s_n\eps^2, \cdot), p_{t_i - s_n\eps^2} * \phi_i\>)_{i = 1}^n) \label{eq:est-term1}\\
    \quad\ & + d_P((\<u_\eps(s_n\eps^2, \cdot), p_{t_i - s_n\eps^2} * \phi_i\>)_{i = 1}^n, \mathcal{N}(0, \Sigma_{s_n, t}^2(\Phi)))\label{eq:est-term2}\\
    \quad\ & + d_P(\mathcal{N}(0, \Sigma_{s_n, t}^2(\Phi)), \mathcal{N}(0, \Sigma_t^2(\Phi))).\label{eq:est-term3}
  \end{align}
  Now, by \eqref{eq:mild-estimate} and standard estimates on Gaussian measures (utilizing the relation between 
  the L\'evy-Prokhorov metric and the Wasserstein metric), the first and last term on the 
  right hand side are bounded uniformly in \(\eps\) by 
  \[\eqref{eq:est-term1} \lesssim (s_n + 1)^{-\frac{d}{2} + 1} \text{ and }
    \eqref{eq:est-term3} \lesssim \sup_{i, j}\left|\sqrt{\Sigma_{s_n, t}^2(\Phi)_{i, j}} - \sqrt{\Sigma_t^2(\Phi)_{i, j}}\right|\]
  respectively. Thus, we conclude that 
  \begin{align*}
    & \limsup_{\eps \to 0} d_P((\<u_\eps(t_i, \cdot), \phi_i\>)_{i = 1}^n, \mathcal{N}(0, \Sigma_t^2(\Phi))) \\
    \lesssim\ & (s_n + 1)^{-\frac{d}{2} + 1}  
        + \sup_{i, j}\left|\sqrt{\Sigma_{s_n, t}^2(\Phi)_{i, j}} - \sqrt{\Sigma_t^2(\Phi)_{i, j}}\right|
  \end{align*}
  for which the right hand side vanishes as \(n \to \infty\).
\end{proof}

We remark that, as a consequence of the above convergence, the constant \(\Sigma_t^2(\Phi)\) is 
independent of the choice of the subsequence \(s_n\) (as alluded to by the notation). 

\begin{remark}\label{rmk:pointwise}
  Note that the f.d.d. convergence is stated for test functions as we will obtain tightness in a space 
  of distributions. Nonetheless, by following the same proof replacing the test function with pointwise 
  evaluation, the convergence remains to hold in f.d.d.
\end{remark}

For convergence in $C((0,\infty),\mathcal S'(\mathbb R^d))$, it remains to justify tightness in time. 
For this, by Mitoma's criterion \cite{Mitoma1983}, it is enough to prove tightness of the 
real-valued processes $(\langle u_\varepsilon(t),\phi\rangle)_{t\in[\tau,T]}$, for every 
$\phi\in\mathcal S(\mathbb R^d)$ and every $0<\tau<T<\infty$. Fix such $\phi,\tau,T$. Using the 
mild formulation, for $\tau\le s<t\le T$, we decompose
$$ \langle u_\varepsilon(t) - u_\varepsilon(s), \phi\rangle =
\langle u_\varepsilon(s),p_{t-s}*\phi-\phi\rangle-\lambda\varepsilon^{d-2}\int_s^t
\langle u_\varepsilon(r)^3,p_{t-r}*\phi\rangle\dd r .$$
For the first term, the Poincaré estimate and Corollary~\ref{cor:max_principle} give
$$\mathbf{E}\left|\langle u_\varepsilon(s), p_{t-s}*\phi-\phi\rangle\right|^2 \lesssim
\|p_s*\rho_\varepsilon*(p_{t-s}*\phi-\phi)\|_{L^2}^2.$$
Since $s\ge\tau$, the heat semigroup is smoothing uniformly in $\varepsilon$, and hence, for every $\alpha\in(0,1]$,
$$\|p_s*\rho_\varepsilon*(p_{t-s}*\phi-\phi)\|_{L^2}
\lesssim_{\tau,T,\phi,\alpha}|t-s|^\alpha.$$
For the nonlinear term, the estimate used in the proof of Lemma 4.1 yields, uniformly for $r\in[\tau,T]$,
$$\mathbf{E}\left|\langle u_\varepsilon(r)^3, p_{t-r}*\phi\rangle\right|^2
\lesssim_{\tau,T,\phi}1,$$
and therefore
$$ \mathbf{E}\left|\lambda\varepsilon^{d-2}\int_s^t \langle u_\varepsilon(r)^3,p_{t-r}*\phi\rangle \dd r \right|^2
\lesssim_{\tau,T,\phi,\lambda}|t-s|^2.$$
Combining the two bounds gives
$$ \mathbf{E}|\langle u_\varepsilon(t)-u_\varepsilon(s),\phi\rangle|^2
\lesssim_{\tau,T,\phi,\lambda}|t-s|^{2\alpha}$$
for some $\alpha>1/2$, uniformly in $\varepsilon$. Thus, by Kolmogorov's criterion, the family 
$(\langle u_\varepsilon(t),\phi\rangle)_{t\in[\tau,T]}$ is tight in $C([\tau,T])$. Since $\tau,T$ 
and $\phi$ were arbitrary, Mitoma's criterion implies tightness of $(u_\varepsilon)_{\varepsilon>0}$ 
in $C((0,\infty),\mathcal S'(\mathbb R^d))$.

\begin{corollary}
  Let $d\geq 3$ and defining $u_\eps$ as in \eqref{eq:rescaled_u}, the family $(u_\eps)_{\eps > 0}$ converges in law in 
  $C\left((0, \infty), \mathcal{S}'(\mathbb{R}^d)\right)$ to $(\sigma_\lambda p_t * \tilde{\xi})_{t>0}$ where 
  $\tilde{\xi}$ is a spatial white noise and $\sigma_\lambda$ is a positive constant. 
\end{corollary}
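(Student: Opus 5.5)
The corollary combines three ingredients: the f.d.d.\ convergence of the preceding lemma, the tightness argument just given, and an identification of the limiting covariance as that of $\sigma_\lambda p_t*\tilde\xi$ with $\sigma_\lambda>0$.

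\textbf{Step 1 (tightness and uniqueness).} Tightness in $C((0,\infty),\mathcal S'(\mathbb R^d))$ follows from Mitoma's criterion together with the Kolmogorov estimate above. Any subsequential limit $v$ then has finite-dimensional distributions $\langle v(t_i),\phi_i\rangle$ that are jointly centered Gaussian, with covariance $\Sigma_t^2(\Phi)$ given by the lemma. The law of a continuous $\mathcal S'$-valued process is determined by these tested f.d.d., so every subsequential limit has the same law, and the whole family converges.

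\textbf{Step 2 (structure of the covariance).} I identify $\Sigma_t^2(\Phi)$. By construction,
\[
\Sigma_t^2(\Phi)_{i,j}=\langle p_{t_i+t_j}*\phi_i,\phi_j\rangle\,\lim_{n}\int \mathbf E[u(s_n,0)u(s_n,x)]\dd x .
\]
The remark after the lemma shows the limit is independent of the subsequence, so the scalar
\[
\sigma_\lambda^2:=\lim_{s\to\infty}\int\mathbf E[u(s,0)u(s,x)]\dd x
\]
exists, and it is the same for all $\phi_i,t_i$. It must first be checked to be nonnegative; this holds because $\Sigma_{s,t}^2$ is a limit of variances. The kernel $\langle p_{t_i+t_j}*\phi_i,\phi_j\rangle=\mathbf E[\langle p_{t_i}*\tilde\xi,\phi_i\rangle\langle p_{t_j}*\tilde\xi,\phi_j\rangle]$ is exactly the covariance of the heat flow started from white noise. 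Hence $v\overset{\text{law}}{=}\sigma_\lambda p_\cdot*\tilde\xi$ on tested f.d.d., and therefore as processes.

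\textbf{Step 3 (positivity of $\sigma_\lambda$).} This is the main point requiring care. I use Lemma~\ref{lem:non-degen}: for positive $\phi$ and fixed $t$,
\[
\mathbf E[\langle u_\eps(t),\phi\rangle^2]\gtrsim\|p_t*\rho_\eps*\phi\|_{L^2}^2\to\|p_t*\phi\|_{L^2}^2>0,
\]
with an implicit constant uniform in $\eps$. Since $\langle u_\eps(t),\phi\rangle$ is bounded in $L^4$ by \eqref{eq:upper-Lp-est}, it is uniformly integrable squared, so second moments pass to the limit. This gives
\[
\sigma_\lambda^2\|p_t*\phi\|_{L^2}^2=\lim_\eps\mathbf E[\langle u_\eps(t),\phi\rangle^2]\gtrsim\|p_t*\phi\|_{L^2}^2,
\]
hence $\sigma_\lambda>0$.

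The only delicate point is confirming that the constant in Lemma~\ref{lem:non-degen} is independent of $\eps$. Its proof bounds $\eps^{d-2}\int_0^t(s+\eps^2)^{-d/2}\dd s$ uniformly in $\eps$, which uses $d\ge3$, so this holds.
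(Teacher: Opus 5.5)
Your proposal is correct and follows essentially the same route as the paper: finite-dimensional convergence from the preceding lemma, combined with tightness via Mitoma's criterion and Kolmogorov, and positivity of $\sigma_\lambda$ from the $\eps$-uniform lower bound of Lemma~\ref{lem:non-degen}. You also make explicit two points the paper leaves implicit: that the full limit defining $\sigma_\lambda^2$ exists, and the uniform integrability (from the $L^4$ bound \eqref{eq:upper-Lp-est}) that lets second moments pass to the limit.
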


\subsection{Proof of non-triviality and creation of noise}\label{subsec:non_triv-proof}
Let us now prove that the variance is non-trivial and that noise is created, that is, items \ref{item:nontriv} and \ref{item:creation} of Theorem \ref{thm:Main}.
\begin{proof}[Proof of \ref{item:nontriv}]
In order to prove this, it suffices to show that for any fixed $t>0$, one has that
\(\mathbf{E}[u_\varepsilon(t,x)^2] \to 0\) as $\lambda \to +\infty$ uniformly in $\varepsilon$.

Note that the naïve application of Lemma \ref{lem:coming-down} only yields a bound of order 
$\varepsilon^{2-d}$, which is not sufficient. As explained in the outline of the proof, the damping of 
the ODE dominates only at very short times, hence yielding a very poor control in the limit.
Nonetheless, leveraging on this remark, we can still apply this estimate for short times, and then use the Malliavin 
calculus techniques to obtain a better bound for larger times. We provide this idea in full detail in Lemma \ref{lem:upgraded-cd}
from which one immediately obtains this result as a corollary.
\end{proof}

\begin{lemma}\label{lem:upgraded-cd}
  One has that for any \(t\geq \varepsilon^2\), and \( \theta \in [0,\frac{1}{2})\) ,
  \[\mathbf{E}[u_\varepsilon(t,0)^2] \lesssim_\theta \lambda^{-\theta} t^{-d/2}.\]
\end{lemma}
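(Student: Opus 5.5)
Working in the unscaled variables, the claim is $\mathbf{E}[u(t,0)^2]\lesssim_\theta \lambda^{-\theta} t^{-d/2}$ for $t\ge 1$; by scaling this is equivalent to the statement. The plan is to use the coming-down estimate of Lemma~\ref{lem:coming-down} up to a short intermediate time $\tau\in(0,1]$, and the Malliavin/Poincaré machinery after $\tau$.

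\emph{Step 1 (short times).} Lemma~\ref{lem:coming-down}, unscaled, gives $\mathbf{E}[u(\tau,0)^2]\lesssim (\lambda\tau)^{-1}$. Together with the trivial bound $\mathbf{E}[u(\tau,0)^p]\lesssim_p 1$ for $\tau\le 1$ (from \eqref{eq:upper-Lp-est}), interpolation yields $\mathbf{E}[u(\tau,0)^4]^{1/2}\lesssim (\lambda\tau)^{-\theta'}$ for any $\theta'<1/2$. This is where the restriction $\theta<1/2$ naturally enters.

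\emph{Step 2 (transfer to large times).} Restart the dynamics at time $\tau$. The Malliavin derivative with respect to the original noise satisfies, by Proposition~\ref{prop:malliavin_derivative} and the Feynman--Kac/maximum principle of Corollary~\ref{cor:max_principle} applied from time $\tau$,
\[
0\le D_y u(t,x)\le \int p_{t-\tau}(x-z)\, D_y u(\tau,z)\,\dd z .
\]
So it remains to make $D_y u(\tau,\cdot)$ small in an averaged sense. Rather than bounding the derivative pathwise, I would use the mild formulation,
\[
u(t,0)=p_{t-\tau}*u(\tau)(0)-\lambda\int_\tau^t p_{t-r}*u(r)^3(0)\,\dd r .
\]
For the linear term, apply the Covariance Inequality~\ref{prop:cov} to $\mathbf{E}[u(\tau,z)u(\tau,z')]$. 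Alternatively, write $\mathbf{E}[(p_{t-\tau}*u(\tau))^2]=\int\!\!\int p p\, C_\tau$, where the integrated covariance satisfies $\int C_\tau \le \mathbf{E}[u(\tau,0)^2]^{a}\cdot(\text{correlation volume})^{b}$. The goal is
\[
\int |C_\tau(x)|\,\dd x \lesssim \lambda^{-\theta}.
\]
I would obtain this by splitting $|x|\le R$ versus $|x|>R$. On $|x|\le R$, Cauchy--Schwarz together with Step 1 gives $R^d(\lambda\tau)^{-1}$. On $|x|>R$, Lemma~\ref{lem:npoint} gives Gaussian decay $e^{-R^2/C}$. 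Optimizing $R\sim\sqrt{\log\lambda}$ yields $\lambda^{-1}(\log\lambda)^{d/2}\lesssim\lambda^{-\theta}$. With $\tau=1$ this gives $\mathbf{E}[(p_{t-1}*u(1))(0)^2]\lesssim \lambda^{-\theta}t^{-d/2}$ for $t\ge2$.

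\emph{Step 3 (nonlinear remainder).} The nonlinear term carries a prefactor $\lambda$, which is bad for large $\lambda$, so it cannot be treated perturbatively. Instead I would exploit monotonicity. Comparison gives $|u(t,x)|\le$ solution of the heat equation started at time $\tau$ from $|u(\tau)|$, so $\mathbf{E}[u(t,0)^2]\le \mathbf{E}[(p_{t-\tau}*|u(\tau)|)(0)^2]$. The variance part of this is handled as in Step 2, using $\mathrm{Cov}(|u(\tau,x)|,|u(\tau,0)|)$ with the same decay, since $D|u|=\mathrm{sgn}(u)Du$. The mean part $(\mathbf{E}|u(\tau,0)|)^2\lesssim\lambda^{-1}$ does not decay in $t$, which is the main obstacle. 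To deal with it, I would combine comparison only up to a time where the mean is still admissible with Step 2's variance bound, or instead use the splitting in $L^2$ via Poincaré, where the derivative bound $D_yu(t,x)\le p_{t-\tau}*|D_yu(\tau)|$ removes centering issues. Then $\mathbf{E}[u(t,0)^2]\lesssim \mathbf{E}\int (p_{t-\tau}*D_yu(\tau))^2\,\dd y$, and the remaining task is to show $\int\!\!\int \mathbf{E}[D_yu(\tau,z)D_yu(\tau,z')]\,\dd y\,\dd z'\lesssim\lambda^{-\theta}$, using Feynman--Kac damping $e^{-3\lambda\int u^2}$ together with Step 1. I expect this last estimate, namely converting the smallness of $u^2$ into smallness of the integrated derivative, to be the crux.
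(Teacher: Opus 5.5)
\textbf{There is a genuine gap: the proof is not complete.} Your reduction in Step~3 is the right one. Poincar\'e plus the comparison $0\le D_yu(t,\cdot)\le p_{t-\tau}*D_yu(\tau,\cdot)$ is exactly how the paper finishes. But you stop at the estimate that carries all the content of the lemma, namely a $\lambda$-small bound on $\int\mathbf{E}[D_yu(\tau,z)D_yu(\tau,z')]\,\dd y$, and leave it unproved.

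The mild-formulation analysis of Step~2 is correct for the linear term but contributes nothing. As you observe, the nonlinear remainder carries a factor $\lambda$, and the comparison $|u|\le p_{t-\tau}*|u(\tau)|$ loses the mean.

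The route you propose for the crux points the wrong way. Step~1 is an \emph{upper} bound on $u^2$. Inserted into the Feynman--Kac weight $\exp(-3\lambda\int u^2)$ via Jensen, it yields \emph{lower} bounds on $D_yu$; this is exactly how Lemma~\ref{lem:non-degen} uses it. Proving damping would need a lower bound on $u^2$ along Brownian paths, which you do not have. Likewise, Poincar\'e bounds $\mathbf{E}[u^2]$ by $\mathbf{E}\|Du\|_{\mathcal H}^2$ and not conversely, so a small variance does not by itself transfer to the derivative. The fourth-moment interpolation of Step~1 is never used, and it is not where the exponent $\frac12$ comes from.

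\textbf{The missing idea} is a reverse Poincar\'e bound obtained from the two-sided estimate $0\le D_zu(\tau,0)\le K(z):=(p_\tau*\rho)(-z)$ of Corollary~\ref{cor:max_principle}.
\begin{itemize}
\item The sandwich gives $\mathbf{E}[(D_zu(\tau,0))^2]\le K(z)\,\mathbf{E}[D_zu(\tau,0)]$.
\item The function $z\mapsto\mathbf{E}[D_zu(\tau,0)]$ is the kernel of $\Pi_1u(\tau,0)$. Hence $\int\mathbf{E}[D_zu(\tau,0)]^2\,\dd z=\mathbf{E}[(\Pi_1u(\tau,0))^2]\le\mathbf{E}[u(\tau,0)^2]\lesssim(\lambda\tau)^{-1}$ by Lemma~\ref{lem:coming-down}.
\item Cauchy--Schwarz in $z$ then gives $\mathbf{E}\|Du(\tau,0)\|_{\mathcal H}^2\lesssim\|K\|_{L^2}(\lambda\tau)^{-1/2}$. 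By stationarity and Cauchy--Schwarz again, $\int\mathbf{E}[D_zu(\tau,x)D_zu(\tau,y)]\,\dd z\lesssim\lambda^{-1/2}$ uniformly in $x,y$ for $\tau=1$ (time $\varepsilon^2$ after rescaling). This step is where the $\frac12$ enters.
\item Combine this with the $\lambda$-independent Gaussian bound $\int (p_\tau*\rho)(x-z)(p_\tau*\rho)(y-z)\,\dd z$. You can use $\min(a,b)\le a^{2\theta}b^{1-2\theta}$, which keeps integrable Gaussian decay exactly when $\theta<\frac12$, or your own near/far splitting. Either gives the integrated estimate you need.
\end{itemize}
Your comparison and Poincar\'e steps then conclude.
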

\begin{proof}
By projecting onto the first chaos (recall Equation~\eqref{eq:malliavin-lower}), 
we have by Lemma \ref{lem:coming-down} that 
\begin{equation}\label{eq:upgraded-cd-1}
  \int \mathbf{E}[D_z u_\varepsilon(\varepsilon^2, 0)]^2 \dd z \le \mathbf{E}[u_\varepsilon(\varepsilon^2, 0)^2] 
    \lesssim \lambda^{-1} \varepsilon^{-d}.
\end{equation}
Thus, by recalling Corollary~\ref{cor:max_principle}, we have that 
\(0 < D_z u_\varepsilon(\varepsilon^2, 0) \le (p_{\varepsilon^2} * \rho_\eps)(z)\) and so,
\begin{align*}
  \mathbf{E}[\|D u_\eps(\eps^2, 0)\|_\CH^2] 
  & = \mathbf{E}\left[\int (D_z u_\eps(\eps^2, 0))^2 \dd z\right]
    \le \int \mathbf{E}[D_z u_\eps(\eps^2, 0)] (p_{\eps^2} * \rho_\eps)(\dd z)\\
  & \lesssim \left(\int \mathbf{E}[D_z u_\eps(\eps^2,0)]^2 (p_{\eps^2} * \rho_\eps)(\dd z)\right)^{\frac{1}{2}}\\
  & \lesssim \eps^{-\frac{d}{2}}\left(\int \mathbf{E}[D_z u_\eps(\eps^2, 0)]^2 \dd z\right)^{\frac{1}{2}} \lesssim \lambda^{-1/2}\varepsilon^{-d}
\end{align*}
where we used Equation~\eqref{eq:upgraded-cd-1} in the last step. Hence, by Cauchy-Schwarz, we obtain that 
\[\int \mathbf{E}[D_z u_\eps(\eps^2, x)D_z u_\eps(\eps^2, y)] \dd z \le \mathbf{E}[\|D u_\eps(\eps^2, 0)\|_\CH^2] 
\lesssim \lambda^{-1/2}\varepsilon^{-d}\]
for any \(x, y \in \mathbb{R}^d\). Thus, interpolating this inequality 
with the na\"ive estimate obtained from Corollary~\ref{cor:max_principle}
we get that for any \(x, y\in \mathbb{R}^d\) and \(\theta \in [0, \frac{1}{2})\),
\[\int \mathbf{E}[D_z u_\eps(\eps^2, x)D_z u_\eps(\eps^2, y)] \dd z \lesssim \lambda^{-\theta} p_{c(\theta)\varepsilon^2} (x - y)\]
where we used the fact that \(\rho\) is compactly supported and so, 
\(p_{\varepsilon^2} * \rho_\eps \lesssim p_{2\varepsilon^2}\).
Finally, by the comparison principle, we have that
\(D_z u_\eps(t, \cdot) \le p_{t - \eps^2} * D_z u_\eps(\eps^2, \cdot)\) for any \(t \ge \eps^2\), and thus,
\begin{align*}
  \int \mathbf{E}[D_z u_\eps(t, 0)^2] \dd z 
  & \le \int \int \int \mathbf{E}[D_z u_\eps(\eps^2, x)D_z u_\eps(\eps^2, y)] \dd z p_{t - \eps^2}(x)p_{t - \eps^2}(y) \dd x \dd y\\
  & \lesssim \lambda^{-\theta} \int \int p_{c(\theta)}(x - y) p_{t - \eps^2}(x)p_{t - \eps^2}(y) \dd x \dd y\\
  & = \lambda^{-\theta} p_{2(t - \eps^2) + c(\theta)\varepsilon^2}(0) \lesssim \lambda^{-\theta} t^{-d/2}.
\end{align*}
Consequently, we obtain the desired bound by the Poincar\'e inequality.
\end{proof}
Finally, we conclude this section with the proof of the creation of noise, i.e. item \ref{item:creation} of Theorem \ref{thm:Main}.
\begin{proof}[Proof of \ref{item:creation}]
The joint Gaussianity follows directly by applying the Central Limit Theorem for spatial averages to linear combinations 
of \(u_\varepsilon(t,\cdot)\) and \(p_t*u_\varepsilon(0,\cdot)\). Moreover, the correlation of the limiting 
objects are non-zero as 
\[\mathbf{E}[u_\varepsilon(t,x)p_t*u_\varepsilon(0,x)] 
  = \mathbf{E}[\Pi_1 u_\varepsilon(t,x)p_t*u_\varepsilon(0,x) ] \gtrsim \|p_t*\rho_\varepsilon\|_2^2. \]
Thus, it remains to show that they are not fully correlated. For this, thanks to the orthogonality of 
chaoses, it suffices to show that \(\mathbf{E}[|\Pi_{3}u_\varepsilon(t,x)|^2]\) is lower bounded 
uniformly in $\eps$, say for $t \geq \eps^2$. Indeed, by the joint convergence of 
\((u_\varepsilon(t,x), p_t*u_\varepsilon(0,x))\) (cf. Remark~\ref{rmk:pointwise}),
we have that
\begin{align*}
  \sigma^2_\lambda c^2_\lambda \|p_t\|_{2}^4 & = \lim_{\varepsilon \to 0} \mathbf{E}[u_\varepsilon(t,x)p_t*u_\varepsilon(0,x)]^2 
   = \lim_{\varepsilon \to 0} \mathbf{E}[\Pi_1 u_\varepsilon(t,x) p_t*u_\varepsilon(0,x)]^2 \\
  & \le \limsup_{\varepsilon \to 0} \mathbf{E}[(\Pi_1 u_\varepsilon(t,x))^2] \mathbf{E}[(p_t*u_\varepsilon(0,x))^2]\\
  & = \limsup_{\varepsilon \to 0} (\mathbf{E}[u_\varepsilon(t,x)^2] - \mathbf{E}[(\Pi_3 u_\varepsilon(t,x))^2]) \mathbf{E}[(p_t*u_\varepsilon(0,x))^2]\\
  & = (\sigma_\lambda^2 - \liminf_{\varepsilon \to 0} \mathbf{E}[|\Pi_3 u_\varepsilon(t,x)|^2]) \|p_t\|_{2}^4.
\end{align*}
Consequently, we have that \(c_\lambda < 1\) as long as \(\liminf_{\varepsilon \to 0} \mathbf{E}[|\Pi_3 u_\varepsilon(t,x)|^2] > 0\).

For this, we recall the mild formulation of the equation satisfied by \(u\)
\begin{align*}
  u_\eps(t,x) & = p_t * u_\eps(0, x) - \lambda_\eps \int_0^t p_{t-s} * u_\eps(s, x)^3 \dd s \\
    & =: X_\eps(t,x) - \lambda \mathcal N_\eps(u_\eps,u_\eps,u_\eps)(t,x)
\end{align*}
where we introduced the trilinear operator \(\mathcal{N}_\eps\) by defining
\[\mathcal{N}_\eps(f, g, h) (t,x) := \eps^{d - 2} \int_0^t p_{t-s} * (f(s, \cdot) g(s, \cdot) h(s, \cdot))(x) \dd s.\]
Writing the first Picard iteration by setting $Y_\eps(t,x) := \mathcal N_\eps(u_\eps,u_\eps,u_\eps)(t,x)$, 
we have that
\begin{align*}
 u_\eps(t,x) &= X_\eps(t,x) - \lambda \mathcal{N}_\eps(X_\eps,X_\eps,X_\eps)(t,x) + 3\lambda^2 \mathcal{N}_\eps(X_\eps,X_\eps,Y_\eps)(t,x) \\
 & - 3\lambda^3\mathcal{N}_\eps(X_\eps,Y_\eps,Y_\eps)(t,x) + \lambda^4 \mathcal{N}_\eps(Y_\eps,Y_\eps,Y_\eps)(t,x).
\end{align*}
Thus, by noting that (via a back of the envelope computation) all terms involving \(\mathcal{N}_\eps\) 
are expected to be of the same order, one anticipates, and proves thereafter, that for sufficiently small $\lambda$,
one has that
\[u_\eps(t,x) = X_\eps(t,x) - \lambda \mathcal{N}_\eps(X_\eps,X_\eps,X_\eps)(t,x) + \lambda^2 R_\eps(t,x)\]
with \(\mathbf{E}[R_\eps(t,x)^2] \lesssim t^{-\frac{d}{2}}\) uniformly in \(\eps\) and \(\lambda\).
Consequently, assuming the above analysis, we obtain the desired lower bound provided that 
\(\mathbf{E}[|\Pi_3 \mathcal{N}_\eps(X_\eps,X_\eps,X_\eps)(t,x)|^2] \gtrsim t^{-\frac{d}{2}}\) uniformly 
in \(\eps\). This lower bound follows as
\begin{align*}
&\mathbf{E}[|\Pi_3 \mathcal{N}_\eps(X_\eps,X_\eps,X_\eps)(t,x)|^2]\\
=\ & \eps^{2d-4}\int_0^t \int_0^t \int \int p_{t-s}(x-y) p_{t-s'}(x-y') \mathbf{E}[\Pi_3 (X_\eps(s,y)^3) \Pi_3 (X_\eps(s',y')^3)] \dd y \dd y' \dd s \dd s'\\
=\ & 6\eps^{2d-4} \int_0^t \int_0^t p_{2t - s - s'} * (p_{s+s'} * \rho_\eps * \tilde \rho_\eps)^3(0) \dd s \dd s' \\
\gtrsim\ & \eps^{2d-4}t^{-\frac{d}{2}} \int_{\tfrac{\eps^2}{2}}^t \int_{\tfrac{\eps^2}{2}}^t \frac{1}{\left(s+s'\right)^d} \dd s \dd s' \asymp t^{-\frac{d}{2}}
\end{align*}
where we used the following equality for the second equality
\begin{align*}
  \mathbf{E}[\Pi_3 (X_\eps(s, y)^3) \Pi_3 (X_\eps(s', y')^3)] 
  & = 6 \mathbf{E}[X_\eps(s,y)X_\eps(s', y')]^3\\ 
  & = 6 (p_{s+s'} * \rho_\eps * \tilde \rho_\eps)^3(y - y'),
\end{align*}
and where for the inequality we restricted the integration domain and used that thanks to the compact support of $\rho$,
\[ p_r * \rho_\eps *\tilde{\rho}_\eps \gtrsim p_r \text{ pointwise whenever } r \gtrsim \varepsilon^2.\]
The last step follows by a direct computation, and we obtain the desired lower bound on the 
asymptotic. 

We now complete the proof by arguing that \(\mathbf{E}[R_\eps(t,x)^2] \lesssim t^{-\frac{d}{2}}\) uniformly in 
\(\eps\) and \(\lambda\).
For this, we estimate independently each of the 3 terms involving the non-linearity $\mathcal{N}_\eps$ in the 
definition of \(R_\eps\). For instance, for the term \(\mathcal{N}_\eps(X_\eps,X_\eps,Y_\eps)\), we have that
\begin{align*}
&\mathbf{E}[|\mathcal{N}_\eps(X_\eps, X_\eps, Y_\eps)(t,x)|^2]\\
& = \int_0^t \int_0^t \int \int p_{t-s}(x-y) p_{t-s'}(x-y') I_\varepsilon^{(2, 1)}(s, s', y, y') \dd y \dd y' \dd s \dd s'.
\end{align*}
where we have set 
\[I_\varepsilon^{(n, m)}(s, s', y, y') := 
  \varepsilon^{2(d - 2)}\mathbf{E}[X_\eps^n(s, y)X_\eps^m(s', y')Y_\eps^n(s, y)Y_\eps^m(s', y')]\]
Similarly, the variance for the other two terms have the same structure where the only difference is 
the value of $n$ and $m$ in the definition of $I_\varepsilon^{(n, m)}$ with $n + m = 3$. Let us now estimate $I_\varepsilon^{(n, m)}$ 
directly. As $X^n_\eps Y^m_\eps$ is centered, by the covariance inequality, we have that
{\hfuzz=15pt
\begin{align*}
  |I_\varepsilon^{(n, m)}(s, s', y, y')| & \le 
    \varepsilon^{2(d - 2)} \int \sqrt{\mathbf{E}[|D_z(X^n_\eps Y^m_\eps (s, y))|^2]} 
      \sqrt{\mathbf{E}[|D_z(X^n_\eps Y^m_\eps(s', y'))|^2]} \dd z,
\end{align*}}
and we focus on estimating terms of the form \(|D_z(X^n_\eps Y^m_\eps (s, y))|\). 
To this end, by the product rule for the Malliavin derivative, it suffices if we can provide an 
estimate for the terms of the form
\begin{equation}\label{eq:DXDY-est}
  \begin{rcases}
    & \mathbf{E}[|X_\eps^{n - 1} Y_\eps^m D_z X_\eps(s, y)|^2]\\
    & \mathbf{E}[|X_\eps^{n} Y_\eps^{m - 1} D_z Y_\eps(s, y)|^2]
  \end{rcases}\lesssim (s+\varepsilon^2)^{-d} p_{s+\varepsilon^2}^2(y - z).
\end{equation}
Indeed, assuming \eqref{eq:DXDY-est}, it holds that
\begin{align*}
  |I_\varepsilon^{(n, m)}(s, s', y, y')| & \le 
    \varepsilon^{2(d - 2)} (s+\varepsilon^2)^{-\frac{d}{2}}(s'+\varepsilon^2)^{-\frac{d}{2}} p_{s+s'+\varepsilon^2}(y-y'),
\end{align*}
from which the desired upper bound on $R_\varepsilon$ is immediate.

The computation required for \eqref{eq:DXDY-est} is rather explicit and we collect below the 
inequalities we need to complete the recipe. Straightaway, we compute 
\begin{equation}\label{eq:DX-est}
  D_z X_\eps(s, y) = (p_s * \rho_\eps)(z - y) \lesssim p_{s+\varepsilon^2}(z - y)
\end{equation}
and
\begin{equation}\label{eq:DY-est}
  \begin{aligned}
    D_z Y_\eps(s, y) & = \eps^{d - 2} \int _0^s (p_{s - r} * D_z (u_\eps(r, \cdot)^3))(y)\dd r\\
    & = \eps^{d - 2} \int _0^s \left(p_{s - r} * (3(u_\eps(r, \cdot)^2D_z u_\eps(r, \cdot)))\right)(y)\dd r \\
    & \lesssim \varepsilon^{d - 2} \int_0^s (p_{s - r} * (u_\eps(r, \cdot)^2 (p_{r+\eps^2})(z - \cdot)))(y) \dd r
  \end{aligned}
\end{equation}
where we used the point-wise bound on the Malliavin derivative of $u$ (i.e. Corollary~\ref{cor:max_principle}).
Note that this also shows that $D_zY_\varepsilon$ is pointwise non-negative, so that the bound actually holds in absolute value.

Moreover, we have that 
\begin{equation}\label{eq:X4-est}
  \mathbf{E}[|X_\eps(s, y)|^8] = \mathbf{E}[|(p_s * \rho_\eps * \xi)(y)|^8] \sim (s+\varepsilon^2)^{-2d}
\end{equation}
and by Equation~\eqref{eq:DY-est} and Minkowski's and the Cauchy-Schwarz inequality, we have that
\begin{align*}
    &\mathbf{E}[|Y_\eps(s, y)|^8] \lesssim \mathbf{E}[\|DY_\eps(s, y)\|_{\CH}^8] 
      \le \left(\int \mathbf{E}[|D_z Y_\eps(s, y)|^8]^{\frac{1}{4}} \dd z\right)^4\\
    \lesssim\ &\varepsilon^{8(d - 2)} \left(\int\mathbf{E}^{\frac{1}{4}}\left[\left(\int_0^s (p_{s - r} * (u_\eps(r, \cdot)^2 (p_{r+\eps^2})(z - \cdot)))(y) \dd r\right)^8\right] \dd z\right)^4\\
    \le\ &\varepsilon^{8(d - 2)} \left(\int \left(\int_0^s (p_{s - r} * (\mathbf{E}[u_\eps(r, \cdot)^{16}]^{\frac{1}{8}} (p_{r+\eps^2})(z - \cdot)))(y) \dd r\right)^2 \dd z\right)^4\\
    \lesssim\ &\varepsilon^{8(d - 2)} \left(\int \left(\int_0^s \left(r+\eps^2\right)^{-\tfrac{d}{2}}(p_{s - r} * p_{r + \eps^2}(z - \cdot))(y) \dd r\right)^2 \dd z\right)^4.
\end{align*}
where we used the fact that \(\rho\) is sub-Gaussian and the Poincaré bounds on the $16^{th}$ moment of $u$ in the last step.
We may now use the Chapman-Kolmogorov identity and compute the inner time integral, which yields as claimed

\begin{equation}\label{eq:Y4-est}
  \mathbf{E}[|Y_\eps(s, y)|^8] \lesssim \mathbf{E}[\|DY_\eps(s, y)\|_{\CH}^8] \lesssim \left(s+\eps^2\right)^{-2d}.
\end{equation}
Similarly, by the same argument as above, we find 
\begin{align}\label{eq:DY2-est}
  \mathbf{E}[|D_z Y_\eps(s, y)|^4] &\lesssim \varepsilon^{4(d - 2)} 
    \left(\int_0^s (p_{s - r} * (p_{r + \eps^2}(\cdot)p_{r + \eps^2}(z - \cdot)))(y) \dd r\right)^4\\
    &\lesssim p_{s+\varepsilon^2}^4(y - z).
\end{align}
Thus, by combining the above estimates with repeated applications of the Cauchy-Schwarz inequality, we conclude 
\eqref{eq:DXDY-est} as desired.

\end{proof}
\begin{remark}\label{rem:tricorr2d}
Let us compare the arguments used above with the situation in dimension $2$. This is best seen when comparing with the proof mechanism in \cite{gabrielrosati}.
If one performs again the expansion
\[ u_\varepsilon(t,x) = X_\varepsilon(t,x) - \lambda \mathcal{N}_\varepsilon(X_\varepsilon,X_\varepsilon,X_\varepsilon)(t,x) + \cdots,\]
it can be noted from the same computations as above that now, for $t$ of order $1$, one has that
\[ \mathbf{E}[X_\varepsilon(t,x)^2] \asymp 1\] 
which is analogous to the case of higher dimensions. However, a careful investigation of the third order term yields 
\[\mathbf{E}[|\Pi_1 \mathcal{N}_\varepsilon(X_\varepsilon,X_\varepsilon,X_\varepsilon)(t,x)|^2] \sim (\log \tfrac{1}{\varepsilon})^2 \]
on the other hand, one has
\[\mathbf{E}[|\Pi_3 \mathcal{N}_\varepsilon(X_\varepsilon,X_\varepsilon,X_\varepsilon)(t,x)|^2] \sim \log \tfrac{1}{\varepsilon}.\]
While this divergence motivates the choice of the weak coupling\footnote{Note that here the choice of notation differs from the choice in \cite{gabrielrosati}, but corresponds nonetheless to the same weak coupling regime} $\lambda = \lambda_\eps \asymp (\log \tfrac{1}{\eps})^{-1}$ and contrasts strongly with $d\geq 3$ where these quantities remain bounded, this was to be expected because of the criticality in $d=2$.
However, it may be more surprising that they do so at different orders, and this is precisely what prevents higher chaoses from contributing to the limiting noise.
Indeed, in the example above, the two terms have the same homogeneity in $\lambda_\varepsilon$, so that taming the first divergence necessarily kills the second term completely. 
This explains the triviality of the correlations in $d=2$. On the other hand, it is also clear from this picture that a non-trivial dependence in $\lambda$ of the variance is still to be expected in the weak coupling regime.

\end{remark}

\begin{appendix}

\section{Central Limit Theorem for short-range correlated stationary fields}\label{app:clt}
In this appendix, we state a Central Limit Theorem for short-range correlated fields. Similar results are classical but we provide a short proof under assumptions adapted to our setting for completeness.

\begin{theorem}\label{thm:clt}
  Let $\eta$ be a smooth random field on $\mathbb{R}^d$ which is stationary, centered, with moments of any order, odd mixed moments vanishing, and with the following exponential decay of correlations
   \begin{align}
  &\left|\mathbf{E}\left[\prod_{i=1}^p \eta(x_i) \prod_{k=1}^q \eta(y_k)\right]-\mathbf{E}\left[\prod_{i=1}^p \eta(x_i)\right]\mathbf{E}\left[\prod_{k=1}^q \eta(y_k)\right]\right|\\
  \lesssim_{p,q}\ & \exp\left(-C_{p,q}\underset{i,k}{\inf}\|x_i - y_k\|\right). \end{align}
  Then, \(\eta_\eps(x) := \eps^{-\frac{d}{2}} \eta\left(\frac{x}{\eps}\right)\)
  satisfies that for any $m \geq 0, \Phi: \mathbb{R}^d \rightarrow \mathbb{R}^m$ smooth and fast decaying, \[\langle \eta_\eps, \Phi \rangle := \int_{\mathbb{R}^d} \eta_\eps(x) \Phi(x) \dd x \underset{\eps\to 0}{\Longrightarrow} \mathcal{N}\left(0, \Sigma^2(\Phi)\right)\]
  with
  \(\sigma^2 := \int_{\mathbb{R}^d} \mathbf{E}[\eta(0) \eta(x)] \dd x \) and \( \Sigma_{i,j}^2(\Phi) := \sigma^2 \int_{\mathbb{R}^d} \Phi_i(x) \Phi_j(x) \dd x.\)
\end{theorem}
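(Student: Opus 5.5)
By the Cramér--Wold device it is enough to treat a scalar test function. Fix $a\in\mathbb{R}^m$ and set $\psi=\sum_i a_i\Phi_i$, so that $\langle a,\langle\eta_\eps,\Phi\rangle\rangle = \langle\eta_\eps,\psi\rangle =: S_\eps$. We must then show $S_\eps\Rightarrow\mathcal N(0,\sigma^2\|\psi\|_{L^2}^2)$. The plan is the moment method: show that every moment of $S_\eps$ converges to the matching Gaussian moment. This suffices because the Gaussian law is determined by its moments.

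\textbf{Second moment.} By stationarity,
\[\mathbf{E}[S_\eps^2]=\int\!\!\int \psi(x)\psi(y)\,\eps^{-d}C(\tfrac{x-y}{\eps})\,\dd x\,\dd y,\qquad C(z)=\mathbf{E}[\eta(0)\eta(z)].\]
The decay hypothesis with $p=q=1$ gives $|C(z)|\lesssim e^{-c|z|}$, so $C$ is integrable. Hence $\eps^{-d}C(\cdot/\eps)$ is an approximate identity of total mass $\sigma^2$. Since $\psi$ is smooth and fast decaying, this yields $\mathbf{E}[S_\eps^2]\to\sigma^2\|\psi\|_2^2$.

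\textbf{Odd moments.} These vanish identically by the hypothesis that odd mixed moments vanish.

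\textbf{Even moments $n=2k$: reduction to cumulants.} Write
\[\mathbf{E}[S_\eps^n]=\eps^{-nd/2}\int \prod_i\psi(x_i)\,\mathbf{E}\Big[\prod_i\eta(x_i/\eps)\Big]\dd x.\]
Expand the moment into joint cumulants $\kappa$ over set partitions $\pi$ of $\{1,\dots,n\}$. The convergence then follows from two claims:
\begin{enumerate}[label=(\alph*)]
\item every cumulant of order $r\ge 3$ satisfies $\int_{(\mathbb{R}^d)^{r-1}}|\kappa(\eta(0),\eta(z_2),\dots,\eta(z_r))|\,\dd z<\infty$;
\item singletons vanish because $\eta$ is centered.
\end{enumerate}
Granting these, a block of size $r$ contributes $\eps^{-rd/2}\cdot\eps^{(r-1)d}\cdot O(1)=O(\eps^{(r/2-1)d})$ after rescaling $z=x/\eps$ in its $r-1$ relative coordinates and integrating against the bounded, integrable $\psi$'s. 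This is $o(1)$ for $r\ge3$. Only pair partitions survive, and each pair contributes $\sigma^2\|\psi\|_2^2$ in the limit by the second-moment computation. This gives $(2k-1)!!\,(\sigma^2\|\psi\|_2^2)^k$, which is exactly the Gaussian $2k$-th moment.

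\textbf{Main obstacle: claim (a).} The hypothesis only controls moment covariances across a bipartition. We must convert this into integrable decay of joint cumulants in terms of the full spread of the points, i.e.\ a tree or diameter bound.

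The standard route uses that a cumulant vanishes whenever its arguments split into two mutually independent groups. Quantitatively, for any bipartition $I\sqcup J$, $\kappa$ can be written as a sum, with combinatorial coefficients, of products of moments in which at least one factor is a ``truncated'' difference
\[\mathbf{E}\Big[\prod_{I'}\prod_{J'}\Big]-\mathbf{E}\Big[\prod_{I'}\Big]\mathbf{E}\Big[\prod_{J'}\Big],\qquad I'\subset I,\ J'\subset J.\]
The other factors are bounded using finite moments. This gives
\[|\kappa(x_1,\dots,x_r)|\lesssim_r \exp\big(-c_r\,\mathrm{dist}(\{x_i\}_I,\{x_j\}_J)\big)\quad\text{for every bipartition}.\]
Now take the maximal such gap. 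If all bipartition gaps are at most $g$, the points form a connected graph with edges of length at most $g$, so their diameter is at most $(r-1)g$. Hence
\[|\kappa|\lesssim_r \exp\big(-c_r'\,\mathrm{diam}(x_1,\dots,x_r)\big),\]
which is integrable in the $r-1$ relative coordinates. This is the step I would write carefully. In particular, the exponents $C_{p,q}$ in the hypothesis may depend on $p,q$, which is harmless here since only finitely many $(p,q)$ with $p+q\le n$ are involved for each fixed $n$.
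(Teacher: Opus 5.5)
Your argument is correct, but it controls the higher-order terms differently from the paper. The paper never introduces cumulants. It fixes a mesoscopic scale $r_\varepsilon=\varepsilon|\log\varepsilon|^2$ and splits configuration space according to the clusters of the graph joining points at distance at most $r_\varepsilon$. It then factorizes the $2k$-th moment over these clusters, with an error $e^{-c_k|\log\varepsilon|^2}$ that beats the prefactor $\varepsilon^{-kd}$. Odd clusters are discarded via the symmetry hypothesis, and clusters of size $m\ge 4$ are killed by volume counting, since $\varepsilon^{-dm/2}r_\varepsilon^{d(m-1)}\to 0$. Pairings are evaluated by dominated convergence with the cluster indicators.

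You instead sum over a fixed family of partitions with joint cumulants, so each partition's contribution factorizes exactly over its blocks. All the geometry goes into one lemma: higher-order cumulants decay exponentially in the diameter. Your sketch of that lemma is sound. You subtract the cumulant of the law in which the $I$- and $J$-groups are made independent, which vanishes. You telescope the products of moments so that each term carries exactly one truncated factor, and that factor vanishes unless $B\cap I$ and $B\cap J$ are both nonempty. The remaining factors are bounded uniformly by H\"older and stationarity. Finally, a disconnected threshold graph would produce a bipartition with a larger gap.

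Your route buys sharp rates $O(\varepsilon^{(r/2-1)d})$ with no logarithmic losses, and no configuration-dependent indicators in the pairing limit. It also shows that the hypothesis on odd mixed moments is not needed: singletons vanish by centering, and every other odd block has size at least three. The paper's route applies the decorrelation hypothesis only directly to moments and avoids the moment--cumulant combinatorics, at the price of the auxiliary scale and the log factors.
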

\begin{proof}
By the Cramer-Wold theorem, it is enough to prove the result in the case of a single test function
$\varphi\in \mathcal S(\mathbb R^d)$. Indeed, applying the scalar result to linear combinations $\varphi=a_1\Phi_1+\cdots+a_m\Phi_m$ identifies the covariance matrix of the limiting vector. We therefore prove that
$$\langle \eta_\varepsilon,\varphi\rangle = \int_{\mathbb R^d}\varepsilon^{-d/2}\eta(x/\varepsilon)\varphi(x)\dd x$$
converges in law to a centered Gaussian random variable with variance
$$ \sigma^2\|\varphi\|_{L^2(\mathbb R^d)}^2, \qquad
\sigma^2:=\int_{\mathbb R^d}\mathbf E[\eta(0)\eta(x)]\dd x.$$
The integrability of $x\mapsto\mathbf E[\eta(0)\eta(x)]$ follows from the decorrelation assumption with $p=q=1$, since $\eta$ is centered.

We prove convergence of moments. All odd moments vanish by assumption, so it remains to identify the even moments. Fix $k\ge 1$. We have
$$ \mathbf E[\langle \eta_\varepsilon,\varphi\rangle^{2k}] = \varepsilon^{-kd}
\int_{(\mathbb R^d)^{2k}} \mathbf E\left[\prod_{i=1}^{2k}\eta(x_i/\varepsilon)\right]
\prod_{i=1}^{2k}\varphi(x_i)\,dx_1\cdots \dd x_{2k}.$$
Set
$$ R_\varepsilon:=|\log\varepsilon|^2, \qquad
r_\varepsilon:=\varepsilon R_\varepsilon.$$
Given a point configuration $x=(x_1,\ldots,x_{2k})$, define a partition $\mathcal P_\varepsilon(x)$ of $\{1,\ldots,2k\}$ by taking the
equivalence relation generated by closing the partial relation
$$ i\sim j \quad\Longleftrightarrow\quad |x_i-x_j|\le r_\varepsilon.$$
Under this construction, if $B\neq B'$ are two distinct blocks of $\mathcal P_\varepsilon(x)$, then
$$\inf_{i\in B,j\in B'}|x_i-x_j|>r_\varepsilon.$$
In the variables $\frac{x}{\varepsilon}$ this gives separation at least $R_\varepsilon$ among blocks.

For a fixed partition $\mathcal P$ of $\{1,\ldots,2k\}$, denote by $I_\varepsilon(\mathcal P)$ the contribution of the region
$\{\mathcal P_\varepsilon(x)=\mathcal P\}$ to the above moment. On this region, the decorrelation assumption applied recursively to the different
blocks gives
$$\left|\mathbf E\left[\prod_{i=1}^{2k}\eta(x_i/\varepsilon)\right] -
\prod_{B\in\mathcal P} \mathbf E\left[\prod_{i\in B}\eta(x_i/\varepsilon)\right]\right|
\lesssim_k e^{-c_k R_\varepsilon}.$$
Indeed, distinct blocks are separated by at least $R_\varepsilon$ in the microscopic variables. Consequently, the total contribution of this error
is bounded by
$$\varepsilon^{-kd} e^{-c_k|\log\varepsilon|^2}
\prod_{i=1}^{2k}\|\varphi\|_{L^1(\mathbb R^d)}
\longrightarrow 0,$$
because $e^{-c_k|\log\varepsilon|^2}$ decays faster than any power of $\varepsilon$. Consequently, the moments can be factorized into the blocks 
of $\mathcal{P}$ under $\mathcal{P}_\varepsilon(x) = \mathcal{P}$.

It remains to study the factorized block contributions. If a block $B\in\mathcal P$ has odd cardinality, then
$$\mathbf E\left[\prod_{i\in B}\eta(x_i/\varepsilon)\right]=0$$
by the assumption that all odd moments vanish. Hence only partitions all of whose blocks have even cardinality can contribute.

To prove Gaussianity, we then need to show that any block of size at least four gives a vanishing
contribution. Let $B$ be a block with $m:=|B|\ge4$, and choose a representative $i(B)\in B$. On the region
$\{\mathcal P_\varepsilon(x)=\mathcal P\}$, all points in $B$ are connected to $x_{i(B)}$ by a chain of at most $m-1$ edges of length at
most $r_\varepsilon$. Hence
$$|x_i-x_{i(B)}|\le (m-1)r_\varepsilon, \qquad i\in B.$$
Using the uniform moment bound
$$ \left|\mathbf E\left[\prod_{i\in B}\eta(x_i/\varepsilon)\right]\right| \le C_m,$$
we may estimate the contribution of the variables belonging to $B$ by integrating first with respect to the representative point
$x_{i(B)}$ and then with respect to the remaining $m-1$ points in the block. Once $x_{i(B)}$ is fixed, each of the other points $x_i$,
$i\in B\setminus\{i(B)\}$, is constrained to lie in a ball of radius $O(r_\varepsilon)$ around $x_{i(B)}$. Therefore the available volume for
these relative coordinates is at most of order $r_\varepsilon^{d(m-1)}$. Since $\varphi$ is a bounded function, the
product $\prod_{i\in B}|\varphi(x_i)|$ is bounded by a multiple of $|\phi(x_{i(B)})|$,
hence an integrable function of $x_{i(B)}$, uniformly in the relative coordinates. It follows that the
integral over the variables in $B$ is bounded by a constant depending only on $\varphi$ and $m$, multiplied by the volume factor
$r_\varepsilon^{d(m-1)}$. Combining this with the prefactor $\varepsilon^{-dm/2}$, we get that the absolute value of the corresponding
block integral is bounded by
$$ C_{\varphi,m}\ \varepsilon^{-dm/2} r_\varepsilon^{d(m-1)} = C_{\varphi,m}\ \varepsilon^{d(m/2-1)}
|\log\varepsilon|^{2d(m-1)}.$$
Since $m\ge4$, this tends to zero as $\varepsilon\to0$. Pair blocks, on the other hand, are uniformly bounded. Therefore any partition containing
a block of size at least four has vanishing contribution.

From this one is left to compute the contribution of pairings. Let $\mathcal P$ be a pairing of
$\{1,\ldots,2k\}$. Write its blocks as
$$\mathcal P=\{\{a_1,b_1\},\ldots,\{a_k,b_k\}\}.$$
For each pair, set
$$y_\ell=\frac{x_{b_\ell}-x_{a_\ell}}{\varepsilon}, \qquad \ell=1,\ldots,k.$$
After the change of variables $x_{b_\ell}=x_{a_\ell}+\varepsilon y_\ell$, the contribution of the pairing $\mathcal P$ is
$$\int \mathbf 1_{\{\mathcal P_\varepsilon(x)=\mathcal P\}} \prod_{\ell=1}^k
\mathbf E[\eta(0)\eta(y_\ell)] \prod_{\ell=1}^k \varphi(x_{a_\ell}) \varphi(x_{a_\ell}+\varepsilon y_\ell)
\dd x_{a_1}\cdots \dd x_{a_k}\dd y_1\cdots \dd y_k +o(1).$$
Here the condition $\mathcal P_\varepsilon(x)=\mathcal P$ includes $|y_\ell|\le R_\varepsilon$ for each $\ell$, together with separation
conditions between distinct pairs. Since $R_\varepsilon\to\infty$ and $r_\varepsilon\to0$, the indicator
$\mathbf 1_{\{\mathcal P_\varepsilon(x)=\mathcal P\}}$ converges pointwise almost everywhere to $1$. Moreover,
$$\left|\mathbf E[\eta(0)\eta(y_\ell)]\right|\lesssim e^{-c|y_\ell|}$$
and $\varphi$ is rapidly decaying. Hence dominated convergence gives
$$ I_\varepsilon(\mathcal P) \to \prod_{\ell=1}^k
\left( \int_{\mathbb R^d}\mathbf E[\eta(0)\eta(y_\ell)]\dd y_\ell \right)
\prod_{\ell=1}^k \left( \int_{\mathbb R^d}\varphi(x_{a_\ell})^2\dd x_{a_\ell} \right).$$
Therefore
$$ I_\varepsilon(\mathcal P) \to \sigma^{2k}\|\varphi\|_{L^2(\mathbb R^d)}^{2k}.$$
Consequently, by summing over pairings one obtains
$$ \mathbf E[\langle \eta_\varepsilon,\varphi\rangle^{2k}]\to \frac{(2k)!}{2^k k!}
\sigma^{2k}\|\varphi\|_{L^2(\mathbb R^d)}^{2k}.$$
Together with the vanishing of all odd moments, this identifies the limiting moments as that of the centered Gaussian distribution with variance
$\sigma^2\|\varphi\|_{L^2(\mathbb R^d)}^2$. Since the Gaussian
distribution is characterized by its moments, the desired convergence in law follows.

Finally, applying the scalar result to all linear combinations
$a_1\Phi_1+\cdots+a_m\Phi_m$ and using the Cramer-Wold theorem yields
the stated vector-valued convergence with covariance matrix
$$ \Sigma^2_{ij}(\Phi) = \sigma^2 \int_{\mathbb R^d}\Phi_i(x)\Phi_j(x)\dd x,$$
which concludes the proof.
\end{proof}

\begin{remark}
In the proof of the main result Theorem~\ref{thm:Main}, we actually use the claim for $\varepsilon$ dependent test functions $\Phi = \Phi^{(\varepsilon)}$.
The proof above extends clearly to this case as long as $(\Phi^{(\varepsilon)})_{\varepsilon > 0}$ converges pointwise and is dominated, which is enough for our sake.

\end{remark}
  
\section{Deterministic preliminaries on the Allen-Cahn equation}\label{app:det}
In this appendix we recall for completeness some standard results on  the deterministic solution theory, required to derive rigorously the mild formulation and the PDE for the Malliavin derivative of the solution.
Since we work in the whole space, the initial data of interest are unbounded and hence we need to work with functions of controlled growth at infinity.

In the following, we let $a>0$ be fixed, and introduce the notation
\[ \| f \|_{a} := \sup_{x \in \mathbb{R}^d} |f(x)| e^{-a|x|}\]
for any \(f \in C^0(\mathbb{R}^d)\) (i.e. any continuous function \(f : \mathbb{R}^d \to \mathbb{R}\)).
Classically, 
\[X_a := \left\{  f \in C^0(\mathbb{R}^d) : \sup_{x \in \mathbb{R}^d} |f(x)| e^{-a|x|} < +\infty.\right\}\]
equipped with the above norm is a Banach space. In this setting, let us prove the following.
\begin{lemma}\label{lemma:GWP}
For any $u_0 \in X_a$,
the Equation~\eqref{eq:AC} 
\[\partial_t u(t,x) = \Delta u(t,x) - \lambda u(t,x) ^3,\quad t>0,\:x\in \mathbb{R}^d,\]
started from the initial data $u_0$, has a unique classical solution. Furthermore, one has that 
\[\|u(t, \cdot)\|_a = \sup_{x \in \mathbb{R}^d} |u(t,x)| e^{-a|x|} \lesssim_t \|u_0\|_a.\]  
\end{lemma}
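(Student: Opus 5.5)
The plan is to sandwich the nonlinear solution between solutions of the heat equation and to obtain existence by a monotone approximation argument. The key tool is the comparison principle for $\partial_t u = \Delta u - \lambda u^3$ in the class of functions with at most exponential growth. This uses the Phragmén--Lindelöf type maximum principle for the heat equation in $X_a$: a classical subsolution $w$ of $\partial_t w \le \Delta w + c(t,x) w$ with $c\le 0$, $w(0)\le 0$ and $\sup_{t\le T}\|w(t)\|_{a'}<\infty$ for some $a'$ must satisfy $w\le 0$. This holds since $e^{a'|x|}$ grows slower than Gaussian tails, for example by comparing with $\delta e^{Kt}\cosh$-type barriers $\delta e^{Kt + 2a'\sqrt{1+|x|^2}}$ and letting $\delta\to0$.

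First, the a priori bound. Let $h_\pm(t) = \pm p_t * |u_0|$. Since $\lambda u^3$ has the sign of $u$, $h_+$ is a supersolution and $h_-$ a subsolution of the Allen--Cahn equation. Explicitly, $p_t*e^{a|\cdot|}(x)\le C e^{a^2 t}e^{a|x|}$ (by $|x+y|\le|x|+|y|$ and $\int p_t(y)e^{a|y|}dy\lesssim e^{a^2t}$). This gives $\|p_t*|u_0|\|_a\lesssim_t \|u_0\|_a$. So any solution in the growth class satisfies $|u|\le p_t*|u_0|$, which is the stated estimate.

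Uniqueness follows from the same maximum principle. The difference $w=u-v$ of two solutions solves $\partial_t w=\Delta w - \lambda(u^2+uv+v^2)w$, and the coefficient is $\le 0$, so $w(0)=0$ forces $w\equiv 0$. I would require solutions to lie in $L^\infty_{loc}([0,T];X_{a'})$ for some $a'$, which is the natural uniqueness class.

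For existence I would truncate. Let $u_0^{(n)} = (u_0\wedge n)\vee(-n)$, which is bounded. For bounded data, standard local well-posedness holds in $L^\infty$ by Picard iteration on the mild formulation, since the nonlinearity is locally Lipschitz. Global existence follows since the maximum principle gives $\|u^{(n)}(t)\|_\infty\le n$. Parabolic (Schauder) regularity then makes $u^{(n)}$ classical for $t>0$. By comparison, $u^{(n)}$ is monotone in $n$ separately on the positive and negative parts. More simply, I would split $u_0=u_0^+-u_0^-$ and note that the comparison principle yields $|u^{(n)}-u^{(m)}|\le p_t*|u_0^{(n)}-u_0^{(m)}|$, because the difference solves a linear equation with nonpositive potential. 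The right-hand side tends to $0$ locally uniformly, by dominated convergence with the exponential bound above. So $u^{(n)}$ is Cauchy in $C([0,T];X_{a'})$ for any $a'>a$.

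The limit $u$ satisfies the mild formulation, since $u^{(n)3}\to u^3$ locally uniformly with exponential domination $|u^{(n)}|^3\le (p_t*|u_0|)^3\lesssim e^{3a|x|}$, and the heat kernel integrates such growth. Interior Schauder estimates, applied with the uniform local bounds, upgrade this to a classical solution. The main obstacle I expect is the bookkeeping of growth rates: the cube worsens the weight from $a$ to $3a$. This is harmless because Gaussian kernels absorb any exponential growth, but it means the maximum principle must be stated for an arbitrary exponential rate $a'$ and not only for $a$.
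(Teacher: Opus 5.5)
Your argument is correct, but your existence step takes a different route from the paper's. The paper truncates the \emph{nonlinearity}, replacing $u^3$ by $(u^2\wedge R)u$. It solves the truncated problem by a contraction in $C([0,T],X_a)$ and uses the maximum principle for an $R$-uniform bound in $X_a$, which gives global existence. It then uses weighted smoothing estimates (gradient and time-H\"older bounds for $u_R-p_t*u_0$ in $X_{3a}$) and Arzel\`a--Ascoli to extract a subsequential limit, which it passes into the Duhamel formula.

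You instead truncate the \emph{initial data}, so each approximant is a bounded classical solution from the standard $L^\infty$ theory. You then replace compactness by the comparison estimate $|u^{(n)}-u^{(m)}|\le p_t*|u_0^{(n)}-u_0^{(m)}|$. This follows from the linear maximum principle because $u^{(n)2}+u^{(n)}u^{(m)}+u^{(m)2}\ge 0$. Combined with dominated convergence, it makes the whole sequence Cauchy in $C([0,T];X_{a'})$ for $a'>a$, with no derivative estimates and no subsequence needed for the convergence.

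What your route buys:
\begin{itemize}
\item It is more elementary and quantitative.
\item It gives $|u|\le p_t*|u_0|$ for \emph{every} solution in the growth class, not only for the constructed one.
\item It makes explicit two points the paper leaves to ``standard arguments''. The first is the Phragm\'en--Lindel\"of maximum principle for exponentially growing subsolutions with an unbounded nonpositive potential; your barrier $\delta e^{Kt+2a'\sqrt{1+|x|^2}}$ does the job, since the touching-point argument only uses $c\le 0$. The second is the class $L^\infty_{\mathrm{loc}}([0,T];X_{a'})$ in which uniqueness is claimed.
\end{itemize}

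What the paper's route buys: it builds the solution directly from data in $X_a$ without a separate bounded-data theory. It also yields spatial regularity of $u-p_t*u_0$ as a by-product, whereas you need a separate Schauder step to obtain a classical solution.

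Your aside about monotonicity in $n$ ``separately on the positive and negative parts'' is unnecessary and can be dropped. In any case $u_0^{(n)}$ increases in $n$ where $u_0>0$ and decreases where $u_0<0$, so it is not monotone.
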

\begin{proof}
Let us start with existence. We cannot perform directly a fixed point formulation for this, as the non-linearity is superlinear, and thus
the growth at infinity cannot be controlled in the direct Duhamel formulation.

To fix this issue, let us consider the following equation with truncated non-linearity, for $R>0$.

\[\partial_t u_R(t,x) = \Delta u_R(t,x) - \lambda \left(u_R(t,x)^2\wedge R\right) u_R(t,x),\quad t>0,\:x\in \mathbb{R}^d.\]

Local solutions can be obtained via a fixed-point argument.
For a fixed $R$, the map 

\begin{equation*}
\Psi_{R,T}:
\begin{cases}  &C([0,T],X_a)  \to  C([0,T],X_a)\\
               & \qquad \quad v \qquad \mapsto  p_t*u_0 -3\lambda \int_0^t p_{t-s}*\left(\left(v_s^2\wedge R \right) v_s\right) \dd s\\
\end{cases}
\end{equation*}
is a contraction for small enough $T$. Indeed, we have the following growth estimate for the heat kernel
\[ \| p_t * f \|_a \lesssim e^{a\tfrac{t^2}{2}} \|f\|_a \text{, since } \int_{\mathbb{R}^d} e^{a|y|} p_t(x-y) \dd y \lesssim e^{a\tfrac{t^2}{2}}.\]
It follows that
\[ \|\Psi_{R,T}(v)_t\|_a \lesssim e^{a\tfrac{T^2}{2}}\|u_0\|_a + RT e^{a\tfrac{T^2}{2}}\|v\|_a \]
hence $\Psi_{R,T}$ is well-defined, and since $x \mapsto (x^2\wedge R) x$ is $3R$-Lipschitz,
\[ \|\Psi_{R,T}(v)_t -\Psi_{R,T}(w)_t \|_a \lesssim RT e^{a\tfrac{T^2}{2}}\|v-w\|_a .\]
Consequently, for $T>0$ small enough, $\Psi_{R,T}$ is a contraction.

By the maximum principle, one has a priori that the fixed point $u_R$ actually satisfies $ \|u_R(t) \|_a \leq e^{a\tfrac{t^2}{2}} \|u_0\|_a $, so that
the solution is actually globally defined.

Now, by the following smoothing estimates in the $X_a$ space (that are proved similarly as the growth estimate above)
\[  \|\nabla p_t * f\|_a \lesssim_{a,T} t^{-\tfrac{1}{2}} \|f\|_a,\: \|p_t* f - p_s*f\|_a \lesssim_{a,T} |t-s|^{\alpha}(t\wedge s)^{-\alpha}\|f\|_a \text{ for } 0 < \alpha < 1,\]
together with the Duhamel formulation yields that $v_R(t) = u_{R}(t) - p_t*u_0$ is such that
\[ \sup_{0\leq t\leq T} \|\nabla v_R(t) \|_{3a} + \sup_{0\leq s < t \leq T} \frac{\|v_R(t)-v_R(s)\|_{3a}}{|t-s|^{\tfrac{1}{2}}}  \lesssim_T \|u_0\|_a .\]
Consequently, one can pass to the limit $R\to \infty$ along a subsequence by compactness to get that $u_R$ converges pointwise to a continuous function $u \in C([0,T],X_{3a})$.

By dominated convergence (in particular, using the growth estimate for the heat kernel), one can pass to the limit in the fixed-point formulation to get that
\[u_t = p_t*u_0 -3\lambda \int_0^t p_{t-s}*\left(u_s^3\right) \dd s,\]
so that we get existence of a (classical) solution.
The claimed upper bound follows from the construction because it holds uniformly in $R$,  and the uniqueness is immediate from standard arguments.
Indeed, the difference $w = u-v $ of candidate solutions $u,v$ solves
\[ \partial_t w = \Delta w - \lambda (u^2+uv+v^2) w,\qquad w(0,\cdot) = 0,\] 
hence vanishes. This completes the proof.

\end{proof}
We state here the following differentiability result for the solution map constructed above.
\begin{lemma}
For all $t>0,\:x\in \mathbb{R}^d,$  the map $J_{t,x} : X_a \to \mathbb{R}$ that sends $u_0$ to $u(t,x)$
is Fréchet differentiable. Furthermore, the Fréchet derivative at $u_0$ of the maps $(J_{t,x})_{t,x}$, defines for each $h \in X_a$
a function $v : t,x\mapsto v(t,x) := J'_{t,x} (u_0)[h] $ that is the unique classical solution of the equation
\[\partial_t v(t,x) = \Delta v(t,x) - 3 \lambda u(t,x) ^2v(t,x),\quad t>0,\:x\in \mathbb{R}^d,\]
with initial data $h$.
\end{lemma}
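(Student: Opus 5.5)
We fix $u_0\in X_a$ and $h\in X_a$, write $u^\delta$ for the solution started from $u_0+\delta h$ given by Lemma~\ref{lemma:GWP}, and write $u=u^0$. We show that the difference quotient $w^\delta := (u^\delta-u)/\delta$ converges locally uniformly to $v$, and that the convergence is linear and bounded in $h$.

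\textbf{Step 1: the linearized equation.} We first solve
\[\partial_t v = \Delta v - 3\lambda u^2 v,\qquad v(0,\cdot)=h .\]
The potential $V=3\lambda u^2\ge 0$ is continuous, and by Lemma~\ref{lemma:GWP} it grows at most like $e^{2a|x|}$. Existence and uniqueness of a classical solution therefore follow from the Feynman--Kac representation
\[v(t,x)=\mathbb{E}_x\Big[\exp\Big(-\int_0^t V(t-s,B_s)\dd s\Big)h(B_t)\Big].\]
The comparison principle gives $|v(t,x)|\le p_t*|h|(x)$, hence $\|v(t)\|_a\lesssim_t \|h\|_a$. So $h\mapsto v(t,x)$ is a bounded linear functional on $X_a$, and it is the candidate for $J'_{t,x}(u_0)$.

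\textbf{Step 2: the difference quotient.} Subtracting the equations for $u^\delta$ and $u$ gives
\[\partial_t w^\delta = \Delta w^\delta - \lambda\big((u^\delta)^2+u^\delta u+u^2\big)w^\delta,\qquad w^\delta(0)=h .\]
The coefficient $(u^\delta)^2+u^\delta u+u^2$ is nonnegative, so the maximum principle yields the uniform bound $|w^\delta|\le p_t*|h|$. Consequently $|u^\delta-u|\le \delta\, p_t*|h|$, and $u^\delta\to u$ pointwise with a rate controlled in the weight $e^{a|x|}$.

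\textbf{Step 3: the second-order remainder.} Let $r^\delta:=w^\delta - v$. It solves
\[\partial_t r^\delta = \Delta r^\delta - 3\lambda u^2 r^\delta - \lambda\big((u^\delta)^2+u^\delta u-2u^2\big)w^\delta,\qquad r^\delta(0)=0,\]
and the forcing equals $-\lambda(u^\delta-u)(u^\delta+2u)w^\delta$. Using Steps 1--2 and Lemma~\ref{lemma:GWP}, this forcing is bounded by
\[\delta\,\lambda\,(p_s*|h|)^2\,\big(|u^\delta|+2|u|\big)\lesssim_t \delta\,\|h\|_a^2\,(\|u_0\|_a+\|h\|_a)\,e^{3a|x|}.\]
Since $-3\lambda u^2\le 0$, the comparison principle applied through the Duhamel formula bounds $|r^\delta(t,x)|$ by the heat flow of the absolute value of the forcing. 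The Gaussian growth estimate on $p_t*e^{3a|\cdot|}$ then gives
\[|r^\delta(t,x)|\lesssim_{t,x}\delta\,\|h\|_a^2\,(1+\|u_0\|_a+\|h\|_a).\]
Multiplying by $\delta$, this says
\[|u^{(\delta h)}(t,x)-u(t,x)-\delta v(t,x)| = O(\delta^2\|h\|_a^2)\]
uniformly over $\|h\|_a\le 1$, which is exactly Fréchet differentiability with $J'_{t,x}(u_0)[h]=v(t,x)$.

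\textbf{Main obstacle.} The delicate point is justifying the comparison and maximum principles on the whole space for functions that grow like $e^{ca|x|}$, with unbounded nonnegative potentials. We would handle this in one of two ways. The first is a weighted maximum principle: multiply by $e^{-b\sqrt{1+|x|^2}-Kt}$ for suitable $b>3a$ and large $K$, so that the rescaled function attains its extrema. The second is to pass through the truncated equations of Lemma~\ref{lemma:GWP}, where all coefficients are bounded and the principle is classical, and then let $R\to\infty$. Because the potentials are nonnegative, every bound above holds uniformly in the truncation, so passing to the limit is harmless. Uniqueness of the linear equation follows from the same weighted maximum principle.
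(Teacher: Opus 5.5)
Your proposal is correct and follows essentially the same route as the paper: you write the equation for the second-order remainder $u^{(\delta h)}-u-\delta v$, use the nonnegativity of the potential and the maximum principle, and bound the remainder by the heat flow of a forcing that is quadratic in $h$ and grows like $e^{3a|x|}$. The only difference is cosmetic. You keep the linearized potential $3\lambda u^2$ on the remainder and put the difference quotient $w^\delta$ in the forcing, whereas the paper keeps the secant potential $\lambda(j_1^2+j_1j_2+j_2^2)$ and puts $v$ in the forcing; your write-up also supplies the existence, linearity and boundedness of $h\mapsto v(t,x)$ and the justification of the weighted maximum principle, which the paper leaves implicit.
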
 
\begin{proof}
To get the claimed result, we need to argue that for all $(t,x)$
\[J_{t,x}(u_0+h) = J_{t,x}(u_0) + v(t,x) + o\left(\|h\|_a\right)\]
where $v(t,x)$ is the unique solution of the equation given in the claim.
For this let us introduce the remainder $w(t,x) = J_{t,x}(u_0+h) - J_{t,x}(u_0) - v(t,x)$.
Let us also introduce the shorthand $J(u_0+h) = j_1, J(u_0)= j_2$.
By these definitions $w$ solves
\begin{align}
\partial_t w &= \Delta w - \lambda(j_1^3 -j_2^3-3 j_2^2v)  \\
&= \Delta w - \lambda\left((j_1^2+j_1j_2+j_2^2)(j_1-j_2) -3 j_2^2v\right)\\
&= \Delta w - \lambda(j_1^2+j_1j_2+j_2^2)w - \lambda(2 j_2^2 - j_1^2 - j_1j_2)v.
\end{align}
Since $j_1^2+j_1j_2+j_2^2 \geq 0$, this yields by the maximum principle and the heat-kernel growth estimates, that
\begin{align}
\|w(t)\|_{3a} &\lesssim_t \sup_{s\leq t} \|(j_2(s)-j_1(s))j_2(s)v(s)\|_{3a} + \|(j_2(s)-j_1(s))j_1(s)v(s)\|_{3a}\\
&\lesssim_t \|u_0\|_a\|h\|^2_a,
\end{align}
which concludes the proof of the claim.

\end{proof}
\end{appendix}

\printbibliography
\end{document}